\definecolor{blue(munsell)}{rgb}{0.0, 0.5, 0.69}
\DeclareFontFamily{U}{min}{}
\DeclareFontShape{U}{min}{m}{n}{<-> udmj30}{}
\newcommand{\yo}{\!\text{\usefont{U}{min}{m}{n}\symbol{'210}}\!}
\theoremstyle{definition}
\newtheorem{thm}{Theorem}[subsection]
\newtheorem*{thm*}{Theorem}
\newtheorem{prop}[thm]{Proposition}
\newtheorem*{prop*}{Proposition}
\newtheorem{defn}[thm]{Definition}
\newtheorem{war}[thm]{Warning}
\newtheorem*{war*}{Warning}
\newtheorem{rem}[thm]{Remark}
\newtheorem{constr}[thm]{Construction}
\newtheorem{exa}[thm]{Example}
\newtheorem{notat}[thm]{Notation}
\newtheorem{disc}[thm]{}
\title{The geometry of Coherent topoi and Ultrastructures}
\author{Ivan Di Liberti}
\address{
Ivan \textsc{Di Liberti}: \newline
Department of Mathematics\newline
Stockholm University\newline
Stockholm, Sweden\newline
\href{mailto:diliberti.math@gmail.com}{\sf diliberti.math@gmail.com}
}
\thanks{The author was supported by the Swedish Research Council (SRC, Vetenskapsrådet) under Grant No.~2019-04545. The research has received funding from Knut and Alice Wallenbergs Foundation through the Foundation's program for mathematics.}
\begin{document}

\maketitle

\begin{abstract}
We show that coherent topoi are right Kan injective with respect to flat embeddings of topoi. We recover the ultrastructure on their category of points as a consequence of this result. We speculate on possible notions of ultracategory in various arenas of formal model theory.
 
\end{abstract}

   {
   \hypersetup{linkcolor=black}
   \tableofcontents
   }

\section*{Introduction}

One of the leitmotivs of topos theory is that we can use geometric intuition as a guiding principle to devise correct definitions. The motivation for this paper is to clarify the notion of ultrastructure and ultracategory via a geometric approach. Ultrastructures were defined by Makkai \cite{makkai1987stone} to condense the main properties of the category of models of a first order theory. He successfully used this technology to provide a reconstruction theorem for first order logic that goes under the name of \textit{conceptual completeness}.

\begin{thm*}[Conceptual completeness]
Let $f: \mathcal{F} \to \mathcal{G}$ be a morphisms of pretopoi. If the induced functor between categories of models is an equivalence of categories, then $f$ is an equivalence too, \[f^*: \mathsf{Mod}(\mathcal{G}) \to \mathsf{Mod}(\mathcal{F}).\] 
\end{thm*}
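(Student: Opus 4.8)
The plan is to pass to classifying topoi and reduce the statement to one about geometric morphisms. A morphism of pretopoi $f \colon \mathcal{F} \to \mathcal{G}$ extends to a coherent geometric morphism $g \colon \mathbf{Sh}(\mathcal{G}) \to \mathbf{Sh}(\mathcal{F})$ between the associated coherent topoi (sheaves for the coherent topology, of which $\mathcal{F}$ and $\mathcal{G}$ are the coherent objects). Since taking coherent objects is a biequivalence between coherent topoi and small pretopoi, it is enough to prove that $g$ is an equivalence of topoi. The hypothesis translates neatly under this dictionary: models of a pretopos are exactly the points of its classifying topos, $\mathsf{Mod}(\mathcal{F}) \simeq \mathbf{Pt}(\mathbf{Sh}(\mathcal{F}))$, and the induced functor $f^*$ becomes precomposition with $g$; so we are assuming that $g$ induces an equivalence on categories of points.

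First I would factor $g$ through its surjection--inclusion factorization, $g = i \circ p$ with $p$ a surjection and $i$ an inclusion, and aim to show that both factors are equivalences — a geometric morphism that is simultaneously a surjection and an inclusion is an equivalence. On the logical side this splits the work into two independent tasks: conservativity of $f$ (reflecting isomorphisms, equivalently covers), which corresponds to $g$ being a surjection; and the statement that $f$ is full on subobjects while every object of $\mathcal{G}$ is a subquotient of some $f(A)$, which corresponds to $g$ being an inclusion.

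The surjection half is the routine half. Because coherent topoi have enough points by Deligne's theorem, the family of stalk functors indexed by $\mathbf{Pt}(\mathbf{Sh}(\mathcal{F}))$ is jointly conservative; a morphism of $\mathcal{F}$ is invertible precisely when it is invertible in every model. Transporting this test across the equivalence $f^*$, any arrow of $\mathcal{F}$ that $f$ sends to an isomorphism is already an isomorphism, so $f$ is conservative and $p$ is an equivalence. The same enough-points argument lets me check equalities and covers model by model.

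The inclusion half is the main obstacle, and it is where the genuine content of conceptual completeness lives. I must show that $\mathcal{G}$ carries no more definable structure than $\mathcal{F}$: every object and every subobject in $\mathcal{G}$ must already be $f$-definable. Evaluating in models and using the equivalence shows only that such an object is recovered \emph{pointwise}, uniformly over all of $\mathsf{Mod}(\mathcal{F})$; the difficulty is to promote this uniform pointwise description to an actual definable object of $\mathcal{F}$. This is precisely a Beth-style definability problem, and it is what forces the appearance of extra structure on the category of models: the functor $f^*$, being induced by a pretopos morphism, automatically commutes with ultraproducts, so the ultrastructure of $\mathsf{Mod}(\mathcal{G})$ is transported to that of $\mathsf{Mod}(\mathcal{F})$, and it is this ultraproduct-compatible data — not the bare equivalence of categories — that pins down the missing definable sets and closes the inclusion. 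Isolating and exploiting this ultrastructure, which the rest of the paper reconstructs geometrically, is the crux of the argument.
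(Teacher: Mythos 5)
A point of order first: the paper never proves this statement. It is Makkai's conceptual completeness theorem, quoted in the introduction as motivation and attributed to \cite{makkai1987stone}; the body of the paper reconstructs the ultrastructure on $\mathsf{pt}(\mathcal{E})$ geometrically but does not reprove the duality. So your attempt must be judged on its own merits, and on those merits it contains a genuine gap.

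Your reduction is the standard and correct framing: pass to coherent classifying topoi, identify $\mathsf{Mod}(\mathcal{F})$ with points, factor the induced geometric morphism $g$ as surjection followed by inclusion, and dispatch the surjection half via Deligne's theorem (enough points plus essential surjectivity of $f^*$ gives conservativity of $f$). Two remarks there: (i) conservativity of $f$ makes $g$ a surjection, which forces the \emph{inclusion} factor $i$ to be an equivalence (a surjective inclusion), not the surjection factor $p$ as you wrote; what then remains is precisely that $g$ is an inclusion; (ii) this half is indeed routine. The gap is the other half, and it is not a small one: you correctly identify that the whole content of the theorem is the definability statement --- every object and subobject of $\mathcal{G}$ must be $f$-definable --- but you do not prove it. Naming it (``a Beth-style definability problem'') and asserting that the ultrastructure on the categories of models ``pins down the missing definable sets'' is not an argument; that assertion \emph{is} Makkai's strong conceptual completeness theorem (the reconstruction of a pretopos from ultrafunctors on its ultracategory of models, $\mathcal{F} \simeq \mathsf{Ult}(\mathsf{Mod}(\mathcal{F}),\mathsf{Set})$, or Lurie's variant), i.e.\ exactly the theorem one is trying to establish, invoked as if it were a lemma. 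Moreover, your sketch silently requires two further nontrivial facts, neither established: that an equivalence of bare categories of models whose one direction is induced by a pretopos morphism is automatically an equivalence of \emph{ultracategories} (the quasi-inverse must be shown compatible with ultraproducts), and that such ultraproduct-compatible data suffices to produce the missing definable objects. As it stands, the proposal is a correct map of where the difficulty lives, not a proof that crosses it.
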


This result falls under the umbrella of Stone-like dualities, as many others celebrated results like Gabriel--Ulmer duality, Gabriel--Rosenberg reconstruction theorem, Isbell duality, Tannaka (and Morita) reconstruction and many others. The general idea behind an ultrastructure is to account for the construction of ultraproducts of models. Indeed, given a coherent/first order theory $\mathbb{T}$, a set $X$, and an ultrafilter $U \in \beta(X)$, one can define the functor, \[\int_X (-)dU: \mathsf{Mod}(\mathbb{T})^X \to \mathsf{Mod}(\mathbb{T}).\] 

Such functor takes an $X$-indexed family of models and computes the ultraproduct of those along the ultrafilter $U$. 

Despite delivering a very satisfactory theorem, the notion of ultrastructure seemed quite complicated -- if not ad-hoc -- and it was never clear whether it was the correct or definitive notion. The first attempt of providing a more conceptual framing to understand ultrastructures and ultracategories was given by Marmolejo in his PhD thesis \cite{marmolejo1995ultraproducts}. He was probably the first to trace some geometric aspects of the construction of ultraproducts, introducing the notion of \textit{Łoś category}. We will see that the blueprint of our approach is essentially the same of Marmolejo's, even though we have a quite different way of encoding the same idea. 

In more recent years Lurie \cite{lurieultracategories} revisited the notion of ultracategory, proposing a morally similar, but technically different notion of ultrastructure, ultracategory and ultrafunctor with respect to Makkai's one. Both Makkai's and Lurie's notions are justified by the fact that they manage to deliver the most compelling theorems of this theory. Yet, none of these notions appears \textit{definitive} when read or encountered for the first time for several reasons. The main one being that the definition of ultracategory is in both cases very heavy, and comes together with axioms whose choice seems quite arbitrary; and indeed the two authors make different choices.

\textbf{In this paper} we study the geometric properties of coherent topoi with respect to \textit{flat embeddings}, and we let the notion of ultrastructure emerge naturally from general considerations on the topology of flat embeddings. Our aim is to provide notions of ultrastructure and ultracategory that seem correct when instantiated in the appropriate formal model theory. 

For accessible categories with directed colimits, we show how Lurie's definitions seem a definitive choice. To do so we recover the ultrastructure of a coherent topos from its geometric behavior with respect to flat maps. We do not restrain our attention to accessible categories with directed colimits. Inspired by both Marmolejo and Lurie's treatment of ultracategories via topological stacks/$\mathsf{Sp}$\footnote{In this paper $\mathsf{Sp}$ is the usual category of topological spaces. We prefer this notation to the usual $\mathsf{Top}$ to avoid any collision/clash/misunderstanding with the $2$-category of Topoi.}-indexed categories, we introduce and study the notion of \textit{accessible profile}. In this framework we study ultraprofiles. Finally, in the realm of bounded ionads \cite{ionads,thgeo,thlo}, we study the new notion of ultraionad. The paper opens many more doors than it closes, and thus we finish it by a section of open problems and possible further directions.


\subsection*{Main results and structure of the paper}

\Cref{sec1} is entirely geometric. We start by defining flat morphisms of topoi and in a sense represents a topos theoretic reinterpretation of some seminal papers by Escardo \cite{escardo1997injective,escardo1998properly,escardo1999semantic}. The notion is inspired by an old result of Joyal \cite[III.1.11]{johnstone1986stone} which proved to be crucial in the theory of coherent locales. Next, we briefly recall the theory of (right) Kan injectivity from \cite{dlsolo}. In a nutshell, this amounts to a variation of the classical theory of injectivity where a $2$-dimensional aspect is kept into account. The main result of the section is the following proposition,

\begin{thm*}[\ref{mainth}]
Coherent topoi are right Kan injective with respect to flat geometric embeddings.
\end{thm*}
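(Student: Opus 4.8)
The plan is to exploit the universal property of coherent topoi as classifying topoi, and to produce the right Kan extension concretely by postcomposing a model with a direct image functor. First I would recall (from \cite{dlsolo}) that $\mathcal{E}$ is right Kan injective with respect to a morphism $f\colon \mathcal{A}\to\mathcal{B}$ precisely when the restriction functor $\mathsf{Topos}(f,\mathcal{E})\colon \mathsf{Topos}(\mathcal{B},\mathcal{E})\to\mathsf{Topos}(\mathcal{A},\mathcal{E})$, $h\mapsto h\circ f$, admits a right adjoint which is moreover a section, i.e. a right adjoint right inverse, so that the comparison $2$-cell is invertible. It therefore suffices, given a flat geometric embedding $f$ and a geometric morphism $g\colon\mathcal{A}\to\mathcal{E}$ into a coherent topos, to build an extension $h\colon\mathcal{B}\to\mathcal{E}$ with $h\circ f\cong g$ and to recognise it as the right Kan extension with invertible counit.

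Next I would reduce everything to models. Writing $\mathcal{E}\simeq\mathbf{Sh}(C,J)$ for a coherent site, with $C$ a small coherent category and $J$ the coherent coverage whose covering families are \emph{finite}, Diaconescu's theorem identifies $\mathsf{Topos}(\mathcal{F},\mathcal{E})$ with the category of flat, $J$-continuous functors $C\to\mathcal{F}$. Thus a geometric morphism $\mathcal{A}\to\mathcal{E}$ is the same datum as a model $M\colon C\to\mathcal{A}$, namely a finite-limit-preserving functor sending $J$-covers to jointly epimorphic families, and restriction along $f$ corresponds to postcomposition with the inverse image $f^{*}$.

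The construction itself I would take to be $N:=f_{*}\circ M$. Since $f$ is an embedding, $f_{*}$ is fully faithful, equivalently the counit $f^{*}f_{*}\Rightarrow\mathrm{id}$ is invertible; hence $f^{*}N\cong M$, which says exactly that the associated $h\colon\mathcal{B}\to\mathcal{E}$ satisfies $h\circ f\cong g$ via an invertible $2$-cell. Moreover $f_{*}$, being a right adjoint, preserves finite limits, so $N=f_{*}M$ remains finite-limit-preserving. The one genuinely nontrivial point, and the heart of the argument, is that $N$ still sends covers to jointly epimorphic families: this is where flatness of $f$ must enter, guaranteeing that $f_{*}$ preserves the relevant covers, and where coherence of $\mathcal{E}$ is indispensable, since it reduces the claim to \emph{finite} joint epimorphisms (equivalently, to $f_{*}$ preserving the finite coproducts and epimorphisms of the shape occurring in coherent covers). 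A general geometric embedding fails this and a general cover would be out of reach; the matching of ``flat $=$ preserves finite covers'' with ``coherent $=$ only finite covers'' is exactly what makes the two hypotheses click together.

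To conclude I would note that the pointwise adjunction $f^{*}\dashv f_{*}$ induces an adjunction on functor categories $[C,\mathcal{B}]\rightleftarrows[C,\mathcal{A}]$, which by the previous step restricts to the full subcategories of models, exhibiting $M\mapsto f_{*}M$ as right adjoint to restriction $N\mapsto f^{*}N$ with invertible counit. Transported through Diaconescu's equivalence this is precisely the right adjoint right inverse to $\mathsf{Topos}(f,\mathcal{E})$ demanded by right Kan injectivity, and the invertible counit upgrades mere injectivity to \emph{Kan} injectivity. I expect the main obstacle to be the cover-preservation step: verifying, from the precise definition of flat embedding in \Cref{sec1}, that $f_{*}$ carries the finite jointly epimorphic families underlying coherent covers back to jointly epimorphic families; everything else should follow formally from the adjunction $f^{*}\dashv f_{*}$ and the full faithfulness of $f_{*}$.
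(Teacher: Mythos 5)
Your proposal is correct, and it verifies exactly what the paper's definition of right Kan injectivity asks for (a right adjoint right inverse to $(-)\circ f$); but it takes a genuinely different route from the paper's proof. You argue on the site side: presenting $\mathcal{E}\simeq\mathsf{Sh}(C,J)$ for a coherent site and invoking Diaconescu's equivalence, you build the extension by postcomposing the model $M$ with $f_*$, and the two hypotheses mesh as ``coherence makes every cover finite'' against ``flatness makes $f_*$ preserve finite jointly epimorphic families'' (finite coproducts are finite colimits, and epimorphisms are preserved because they are detected by cokernel pairs, i.e.\ pushouts --- this small verification, which you flag but do not spell out, is the only missing routine detail). The paper never touches sites: it embeds $\mathcal{E}$ into $\mathsf{Psh}(C)$ with $C$ the category of coherent objects, chosen so that $j_*$ preserves directed colimits, invokes Johnstone's right Kan injectivity of presheaf topoi to produce $h$ with $h_*\cong\mathsf{lan}_{i_*}(j_*x_*)$ (\Cref{h_*}), and then checks that $h$ factors through the subtopos by proving $j_*j^*h_*\cong h_*$; there the finiteness match appears as ``flatness of $i$ makes the comma categories indexing $\mathsf{lan}_{i_*}$ filtered'' against ``$j_*$ preserves filtered colimits''. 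The two constructions secretly produce the same extension (Johnstone's presheaf extension is likewise postcomposition of the model with the direct image on representables), so the difference lies in the verification and in what each buys: your argument is more elementary and self-contained, essentially the topos-theoretic transcription of Joyal's localic lemma (\Cref{paradigm}), with no appeal to presheaf injectivity or Kan-extension calculus; the paper's route yields the explicit formula $(i_\sharp(x))_*\cong\mathsf{lan}_{i_*}x_*$, which is reused afterwards (e.g.\ in \Cref{moderatesarekan} to show spartan morphisms are right Kan injective) and makes the proof parametric in its hypotheses, as remarked after \Cref{mainth}.
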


Of course, at this point it is natural to ask whether all topoi which are right Kan injective with respect to flat embeddings are (retracts of) coherent (ones), and we discuss this in \Cref{characterisation}. We have no definitive answer to this question, besides the proposition below.

\begin{prop*}[\ref{char2}]
Let $\mathcal{E}$ be a topos which is right Kan injective with respect to flat embeddings. Then $\mathcal{E}$ admits a geometric surjection from a spatial coherent topos. (In particular it has enough points).
\end{prop*}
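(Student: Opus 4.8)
The plan is to run the standard injectivity argument in its $2$-categorical form: use right Kan injectivity to split $\mathcal{E}$ off as a retract of a well-behaved ambient topos, and then transport points back along the resulting surjection. Concretely, suppose we are handed a flat geometric embedding $j\colon \mathcal{E} \hookrightarrow \mathcal{F}$. Applying right Kan injectivity of $\mathcal{E}$ to $\id_{\mathcal{E}}\colon \mathcal{E}\to\mathcal{E}$ along $j$ produces a geometric morphism $r:=\mathrm{Ran}_{j}(\id_{\mathcal{E}})\colon \mathcal{F}\to\mathcal{E}$ together with an invertible comparison $r\circ j\cong \id_{\mathcal{E}}$; this is exactly the content of the definition recalled from \cite{dlsolo}. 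Passing to inverse images gives $j^{*}r^{*}\cong \id_{\mathcal{E}}$, so that $r^{*}$ is split monic among functors and hence conservative (if $r^{*}\phi$ is invertible then $\phi\cong j^{*}r^{*}\phi$ is too). Thus $r$ is a geometric surjection and $\mathcal{E}$ is a genuine retract of $\mathcal{F}$ inside the $2$-category of topoi. In particular any jointly conservative family of points $q\colon \Set\to\mathcal{F}$ pushes forward to a family $r\circ q\colon \Set\to\mathcal{E}$ that is again jointly conservative, because $(r q)^{*}=q^{*}r^{*}$ and $r^{*}$ is conservative.

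This reduces the proposition to a single task: exhibit one flat embedding $j\colon \mathcal{E}\hookrightarrow\mathcal{F}$ whose codomain itself receives a geometric surjection from a spatial coherent topos. For the codomain I would take a \emph{coherent} topos $\mathcal{F}$. By Deligne's theorem a coherent topos has enough points, and its points, equipped with the coherent (logical) topology, assemble into a spectral space whose associated spatial coherent topos $\mathcal{S}$ admits a canonical geometric surjection $s\colon \mathcal{S}\twoheadrightarrow\mathcal{F}$. Composing, $r\circ s\colon \mathcal{S}\twoheadrightarrow\mathcal{E}$ is a surjection (conservative inverse images compose) out of a spatial coherent topos, which is the assertion; the parenthetical "enough points" then follows immediately, since $\mathcal{S}$ has enough points and these push forward through $r\circ s$ by the first paragraph.

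The hard part is the very existence of the flat embedding $\mathcal{E}\hookrightarrow\mathcal{F}$ into a coherent topos, and this is where the theory of flat morphisms from \Cref{sec1} must do the real work: such an embedding is the topos-theoretic lift of Joyal's localic result \cite[III.1.11]{johnstone1986stone}. I would attempt to build $\mathcal{F}$ from a site presentation $\mathcal{E}\simeq \mathsf{Sh}(\mathcal{C},J)$, realising $\mathcal{E}$ as a subtopos of a coherent topos generated by the same objects, the delicate point being to verify that this inclusion is \emph{flat} in the precise sense of \Cref{sec1} rather than merely a geometric embedding, since flatness is exactly the hypothesis that the Kan extension in the first step consumes. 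I expect this verification, and the care needed to keep $\mathcal{F}$ coherent while staying flat over $\mathcal{E}$, to be the crux of the argument. As a sanity check on the logic, note that a pointless topos may well sit as a (non-localic) subtopos of a coherent one, yet it is not right Kan injective; so it is the Kan injectivity of $\mathcal{E}$, converting the embedding into the retraction $r$, that genuinely produces the points, and the hypothesis cannot be dispensed with.
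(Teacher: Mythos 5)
Your first paragraph is fine as conditional reasoning: if a flat embedding $j\colon \mathcal{E}\hookrightarrow\mathcal{F}$ into a coherent topos existed, then right Kan injectivity applied to $\id_{\mathcal{E}}$ would exhibit $\mathcal{E}$ as a retract of $\mathcal{F}$, retractions have conservative inverse image, and surjections compose. The genuine gap is the input you defer to the last paragraph: the existence of such a $j$ is precisely what the paper flags as unavailable, see \Cref{characterisation}. In the localic case every locale flatly embeds into a coherent one (\Cref{exampleindcompl}), but both natural attempts to lift this to topoi fail: internalizing Joyal's ideal-completion argument over the object classifier $\mathsf{Set}[\mathbb{O}]$ relies on choice, which fails there, and the topos-level analogue of the ideal completion, $\mathcal{E}\to\mathsf{Ind}(\mathcal{E})$, is flat but $\mathsf{Ind}(\mathcal{E})$ is essentially never a Grothendieck topos. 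So what you call ``the crux of the argument'' is an open problem rather than a delicate verification, and the proof cannot get started. (A secondary soft spot: the claim that the points of a coherent topos with the logical topology form a spectral space whose sheaf topos surjects onto $\mathcal{F}$ is also not justified as stated, though that step could be repaired.)

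The paper's proof sidesteps the obstruction by never embedding $\mathcal{E}$ at all. It takes the Diaconescu cover $q\colon\mathcal{L}\twoheadrightarrow\mathcal{E}$, a surjection from a \emph{localic} topos, and applies Joyal's construction to the underlying locale: the flat embedding $i\colon\mathcal{L}\hookrightarrow\mathcal{B}=\mathsf{Sh}(\mathsf{Idl}(\mathcal{L}))$ lands in a coherent localic, hence spatial, topos. Right Kan injectivity of $\mathcal{E}$ is then spent on extending not the identity but the surjection $q$ along $i$, producing $h\colon\mathcal{B}\to\mathcal{E}$ with $h\circ i\cong q$; since $q$ is a surjection, so is $h$. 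The lesson is that the injectivity hypothesis should be applied along a flat embedding of a \emph{cover} of $\mathcal{E}$, where flat embeddings into coherent topoi are actually known to exist, rather than along a conjectural flat embedding of $\mathcal{E}$ itself.
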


At the end of the section we introduce the notion of \textit{spartan} geometric morphism, these morphisms are right Kan injective with respect to flat embeddings (\Cref{moderatesarekan}), so that we obtain an inclusion from the $2$-category of coherent topoi and spartan morphisms into the $2$-category of right Kan injectives with respect to flat morphisms,
\[\mathsf{CohTopoi}_\mathcal{S} \hookrightarrow \mathsf{RInj}(\mathsf{Emb}_\flat).\]


In \Cref{sec2} we concentrate on a special class of flat embeddings of topoi. For $X$ a set, call $i_X : X \to \beta(X)$ the inclusion of $X$ in its space of ultrafilters. Then, the induced geometric morphisms,
\[i_X: \mathsf{Set}^X \to \mathsf{Sh}(\beta(X))\] is a flat embedding of topoi. We collect these morphisms in a class $\mathsf{Emb}_\beta$ and we call \textit{$\beta$-complete} a topos which is right Kan injective with respect to those. The main point of the section is to convince the reader that when a topos is $\beta$-complete, its category of points $\mathsf{pt}(\mathcal{E})$ comes equipped with a canonical ultrastruture. 

Being right Kan injective has both an existence part (\Cref{emergentultrastructures})  and universality part (\Cref{2dimensional}) to it. These two aspects mark the category of points of the topos and must be accounted as the two main ingredients for our notion of ultrastruture,

    \[  \int_X(-)d(-): \mathsf{pt}(\mathcal{E})^X \times \beta(X) \to \mathsf{pt}(\mathcal{E}). \]

\Cref{sec3} starts with a brief digression on formal model theory (\cite{thlo}), that is the study of $2$-categories whose objects look like categories of models of first order theories. We focus on accessible categories with directed colimits $(\mathsf{Acc}_\omega)$, accessible profiles $(\mathsf{Acc}_\omega^\mathsf{Sp})$ and bounded ionads $(\mathsf{BIon})$ as arenas of formal model theory.

\subsubsection*{$(\mathsf{Acc}_\omega)$}
For accessible categories with directed colimits,  the most classically developed approach to the topic (\cite{Makkaipare} and \cite{adamek_rosicky_1994}), we give a definition of ultracategory \Cref{def:ultracategory} which simplifies and conceptualizes Lurie's notion. 

\subsubsection*{$(\mathsf{Acc}_\omega^\mathsf{Sp} \text{ and }\mathsf{BIon})$}
 We introduce accessible profiles, these are $\mathsf{Sp}$-indexed accessible categories with directed colimits $\mathcal{A}^\bullet: \mathsf{Sp}^\circ \to \mathsf{Acc}_\omega$, and are motivated by the reflections of Marmolejo on ultracategories and some of Lurie's proof techniques. We provide a general adjunction between topoi and accessible profiles (\Cref{newadjunction}) and we define the topos of coordinates $\Theta(\mathcal{A}^\bullet)$ for an accessible profile $\mathcal{A}^\bullet$ (\Cref{coordinates}). This offers a test-topos for reconstruction-like results (\Cref{reprensentation}). Accessible profiles offer a solid foundation for formal category theory. To exemplify that, we introduce the notion of ultraprofile and we provide the following result.
    
\begin{prop*}[\ref{ultaprofilebuono}]
Let $\mathcal{E}$ be a topos. $\mathcal{E}$ is $\beta$-complete if and only if its profile of points $\mathbb{pt}^\bullet(\mathcal{E})$ is an ultraprofile.
\end{prop*}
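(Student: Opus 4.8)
The plan is to prove this as a clean two-sided unpacking of definitions, using the characterization of $\beta$-completeness via right Kan injectivity with respect to the class $\mathsf{Emb}_\beta = \{i_X : \mathsf{Set}^X \to \mathsf{Sh}(\beta(X))\}$, together with whatever structural axioms \emph{ultraprofile} encodes (which I expect to be precisely the condition that the $\mathsf{Sp}$-indexed category $\mathbb{pt}^\bullet(\mathcal{E})$ admits suitable pushforward/transition functors along the maps $i_X$, compatible up to the coherent $2$-cells that right Kan injectivity produces). First I would make explicit the translation between the two sides: applying the $2$-functor $\mathsf{pt}(-)$ (points, i.e. geometric morphisms from $\mathsf{Set}$) to a flat embedding $i_X$ and to the topos $\mathcal{E}$ turns the right Kan injectivity datum into structure on the profile. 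Concretely, $\mathsf{pt}(\mathsf{Set}^X) \simeq \mathsf{pt}(\mathcal{E})^X$ and $\mathsf{pt}(\mathsf{Sh}(\beta(X)))$ recovers the fiber $\mathbb{pt}^\bullet(\mathcal{E})$ evaluated along $\beta(X)$, so a right Kan injective extension of a point-diagram along $i_X$ is exactly the ultraproduct-style transition functor that the ultraprofile axioms demand.

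Next I would prove the forward direction ($\beta$-complete $\Rightarrow$ ultraprofile). Assume $\mathcal{E}$ is right Kan injective with respect to every $i_X \in \mathsf{Emb}_\beta$. For each such $i_X$ the Kan injectivity gives, for every geometric morphism $f : \mathsf{Set}^X \to \mathcal{E}$, a right Kan extension $\mathrm{Ran}_{i_X} f : \mathsf{Sh}(\beta(X)) \to \mathcal{E}$ together with the universal $2$-cell. Passing to points and reindexing, this yields the fiberwise functors $\mathbb{pt}^\bullet(\mathcal{E})(X) \to \mathbb{pt}^\bullet(\mathcal{E})(\beta(X))$, i.e. the integration maps, and the universality part (invoking \Cref{2dimensional}) supplies the canonical comparison $2$-cells and their coherences. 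I would then check, one axiom at a time, that these assembled data satisfy the defining conditions of an ultraprofile; the existence part (\Cref{emergentultrastructures}) handles the objects/functors and the universality part handles the required natural transformations and their uniqueness.

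For the converse ($\mathbb{pt}^\bullet(\mathcal{E})$ an ultraprofile $\Rightarrow$ $\beta$-complete), the idea is to reverse the construction: given a diagram $f : \mathsf{Set}^X \to \mathcal{E}$ to be extended along $i_X$, I would use the ultraprofile structure on points to define the candidate extension on points and then argue that it is induced by a genuine geometric morphism out of $\mathsf{Sh}(\beta(X))$, verifying that it is the right Kan extension with the correct universal property. Here I would lean on the fact (implicit in the setup of \Cref{sec2}) that the relevant topoi are spatial/coherent enough that a compatible family of point-level functors with the ultraprofile coherences glues to a geometric morphism — essentially a reconstruction of $\mathcal{E}$'s behavior from its profile of points, in the spirit of the adjunction between topoi and accessible profiles (\Cref{newadjunction}).

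The main obstacle I expect is this converse gluing step: passing from point-level data back to an actual $2$-categorical right Kan injective structure on topoi is not formal, because a compatible family of functors on categories of points need not descend to a geometric morphism without some completeness or definability hypothesis. I would expect the ultraprofile axioms to be tailored precisely so that this descent goes through — so the real content of the proof is checking that the ultraprofile coherences are exactly strong enough to reconstruct the Kan injective extensions, and no stronger. A secondary subtlety is ensuring that the universal $2$-cells produced by right Kan injectivity match, on the nose and not merely up to isomorphism, the transition $2$-cells packaged into the definition of ultraprofile; this is where I would be most careful about which diagrams are required to commute strictly versus coherently.
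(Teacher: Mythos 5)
Your plan diverges from the paper's proof at a point that is a genuine gap: you never pin down what an \emph{ultraprofile} is, and the definition you guess (fiberwise ultraproduct functors subject to coherence axioms, to be ``checked one at a time'') is not the paper's. An ultraprofile is \emph{defined} to be a profile that is right Kan injective with respect to the representable morphisms $\yo(i_X)$ in $\mathsf{Acc}_\omega^{\mathsf{Sp}}$ --- so there is no list of axioms to verify, and, crucially, no ``gluing'' problem in the converse. The step you flag as the main obstacle (passing from point-level data back to a geometric morphism, hoping the axioms are ``tailored precisely so that this descent goes through'') is exactly where your argument stalls, and it is dissolved rather than solved by the actual definitions. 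The missing idea is the observation that $i_X : \mathsf{Set}^X \to \mathsf{Sh}(\beta(X))$ is precisely $\Theta(\yo(i_X))$, so that the relative biadjunction $\Theta \dashv \mathbb{pt}^\bullet$ of \Cref{newadjunction} (equivalently, the bicategorical Yoneda lemma) yields equivalences
\[
\mathsf{Acc}_\omega^{\mathsf{Sp}}(\yo(X), \mathbb{pt}^\bullet(\mathcal{E})) \simeq \mathsf{Topoi}(\mathsf{Set}^X, \mathcal{E}), \qquad
\mathsf{Acc}_\omega^{\mathsf{Sp}}(\yo(\beta X), \mathbb{pt}^\bullet(\mathcal{E})) \simeq \mathsf{Topoi}(\mathsf{Sh}(\beta X), \mathcal{E}),
\]
which identify precomposition with $\yo(i_X)$ with precomposition with $i_X$ up to equivalence. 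Consequently one of these functors admits a right adjoint with invertible unit if and only if the other does, and both directions of the proposition follow at once, symmetrically, for an arbitrary topos $\mathcal{E}$.

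Two further remarks on why your converse, as described, would fail. First, a morphism of profiles from a representable into $\mathbb{pt}^\bullet(\mathcal{E})$ already \emph{is} geometric-morphism data: the fibers of the profile of points are the hom-categories $\mathsf{Topoi}(\mathsf{Sh}(Y),\mathcal{E})$, not mere categories of set-based points. Your worry that a compatible family of point-level functors need not descend to a geometric morphism comes from conflating $\mathbb{pt}^\bullet(\mathcal{E})$ with the bare category $\mathsf{pt}(\mathcal{E})$; for the latter the worry is legitimate, and that is precisely why the proposition is stated for the profile of points. Second, your proposed appeal to the relevant topoi being ``spatial/coherent enough'' is neither available nor needed: no such hypothesis appears in the statement, and none enters the paper's argument.
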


A similar treatment is given for Garner's bounded ionads \cite{ionads,thgeo,thlo}, for which we introduce the notion of ultraionad and we prove the following theorem.

\begin{prop*}[\ref{ionadsaregood}]
\begin{itemize}
\item[]
    \item A topos $\mathcal{E}$ is $\beta$-complete if and only if its ionad of points $\mathbb{pt}(\mathcal{E})$ is an ultraionad.
    \item A ionad $(\mathcal{A}, \mathsf{Int})$ is an ultraionad if and only if its topos of opens $\mathbb{O}(\mathcal{A})$ is a $\beta$-complete topos.
\end{itemize} 
\end{prop*}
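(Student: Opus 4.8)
The plan is to deduce both equivalences from Garner's duality between ionads and topoi, under which $\beta$-completeness and the ultraionad condition become literally the same requirement read on the two sides. Recall from \cite{ionads,thgeo,thlo} that the topos of opens $\mathbb{O}(-)$ and the ionad of points $\mathbb{pt}(-)$ form a biadjunction between $\mathsf{BIon}$ and $\mathsf{Topoi}$ that restricts to a biequivalence between bounded ionads and topoi with enough points; in particular the counit $\mathbb{O}(\mathbb{pt}(\mathcal{E})) \to \mathcal{E}$ is an equivalence when $\mathcal{E}$ has enough points, and the unit $(\mathcal{A}, \mathsf{Int}) \to \mathbb{pt}(\mathbb{O}(\mathcal{A}))$ is an equivalence for every bounded ionad. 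Because of this, the two bullets are not independent: the second, applied to $\mathcal{A} = \mathbb{pt}(\mathcal{E})$ together with $\mathbb{O}(\mathbb{pt}(\mathcal{E})) \simeq \mathcal{E}$, yields the first. I would therefore concentrate on the second equivalence and obtain the first as a corollary.

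First I would pin down how the class $\mathsf{Emb}_\beta$ transports. The topoi $\mathsf{Set}^X$ and $\mathsf{Sh}(\beta(X))$ are spatial, hence have enough points, and their ionads of points are, respectively, the discrete ionad on $X$ and the ionad carried by $\beta(X)$. Thus $\mathbb{pt}$ sends $i_X \colon \mathsf{Set}^X \to \mathsf{Sh}(\beta(X))$, up to the unit equivalences, to the ionad map induced by the inclusion $X \hookrightarrow \beta(X)$, and—both sides having enough points—$\mathbb{O}$ of this ionad map returns $i_X$. Taking the ultraionad condition to be (as in the topos case, and mirroring the profile statement \ref{ultaprofilebuono}) right Kan injectivity in $\mathsf{BIon}$ against precisely the image $\mathbb{pt}(\mathsf{Emb}_\beta)$, this identification matches the two distinguished classes of maps across the duality.

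The substance of the argument is then that right Kan injectivity is preserved and reflected by the biequivalence. Since $\mathbb{O}$ and $\mathbb{pt}$ are $2$-functors and the pertinent units and counits are equivalences on the objects involved, a right Kan lift witnessing injectivity of $(\mathcal{A}, \mathsf{Int})$ against $\mathbb{pt}(i_X)$ is carried by $\mathbb{O}$ to a right Kan lift witnessing injectivity of $\mathbb{O}(\mathcal{A})$ against $i_X$, and symmetrically for $\mathbb{pt}$; the universal $2$-cells transport because a $2$-functor preserves the defining universal property of a pointwise right Kan extension once the comparison cells are invertible. This gives the second bullet, and, feeding in the counit equivalence above, the first.

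The main obstacle I anticipate is exactly this transport of a $2$-dimensional universal property: one must verify that the distinguished lifts recording Kan injectivity—not merely their bare existence, but the universality isolated in \Cref{2dimensional}—are genuinely preserved by $\mathbb{O}$ and $\mathbb{pt}$, which is where the invertibility of the unit and counit on the objects at hand is indispensable. A second point requiring care is purely definitional: for the first equivalence to have content one needs $\mathbb{pt}(\mathcal{E})$ to be a bounded ionad, i.e.\ $\mathcal{E}$ to have enough points, so the natural home of that statement is the $2$-category of topoi with enough points, and one should check—paralleling \ref{char2}—that $\beta$-completeness already forces enough points, so that the restriction is harmless.
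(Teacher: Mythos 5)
Your strategy diverges from the paper's in a way that turns out to matter. The paper's proof uses only the \emph{bare} biadjunction $\mathbb{O} \dashv \mathbb{pt}$ of \Cref{isbell}, together with the identifications $\mathbb{O}(\mathbb{Emb}_\beta) \cong \mathsf{Emb}_\beta$ and $\mathbb{pt}(\mathsf{Emb}_\beta) = \mathbb{Emb}_\beta$: for the first bullet, the adjunction equivalences $\mathsf{BIon}(\Sigma(\beta X), \mathbb{pt}(\mathcal{E})) \simeq \mathsf{Topoi}(\mathbb{O}(\Sigma (\beta X)), \mathcal{E}) \simeq \mathsf{Topoi}(\mathsf{Sh}(\beta X), \mathcal{E})$ (and likewise over $X$) commute with the precomposition functors, so right Kan injectivity of $\mathbb{pt}(\mathcal{E})$ against $\mathbb{Emb}_\beta$ and of $\mathcal{E}$ against $\mathsf{Emb}_\beta$ are the \emph{same} condition for an arbitrary topos $\mathcal{E}$; no invertibility of unit or counit, hence no hypothesis of enough points, ever enters. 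You instead route everything through the biequivalence, and this is where the gaps appear.

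Two steps are genuinely unsupported. First, you assert that the unit $(\mathcal{A},\mathsf{Int}) \to \mathbb{pt}(\mathbb{O}(\mathcal{A}))$ is an equivalence for \emph{every} bounded ionad; by \cite[Sec. 4]{thgeo}, as recalled in \Cref{isbell}, the biadjunction restricts to a biequivalence only between topoi with enough points and \emph{sober} bounded ionads, so your argument for the second bullet covers only sober $\mathcal{A}$, whereas the proposition is stated for all ionads. Second, your derivation of the first bullet from the second requires $\mathbb{O}(\mathbb{pt}(\mathcal{E})) \simeq \mathcal{E}$, i.e.\ that $\mathcal{E}$ has enough points, and your proposed repair --- that $\beta$-completeness forces enough points ``paralleling \Cref{char2}'' --- does not go through: \Cref{char2} concerns $\flat$-complete topoi, and its proof uses the flat embedding of the Diaconescu cover into its ideal completion (\Cref{exampleindcompl}), a map in $\mathsf{Emb}_\flat$ that is \emph{not} in $\mathsf{Emb}_\beta$, so the argument does not adapt to the much smaller class $\mathsf{Emb}_\beta$; the paper proves no such result for $\beta$-complete topoi. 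Worse, in the direction ``$\mathbb{pt}(\mathcal{E})$ ultraionad $\Rightarrow$ $\mathcal{E}$ $\beta$-complete'' such a lemma would not even help: you would need to know that $\mathcal{E}$ has enough points \emph{before} identifying it with $\mathbb{O}(\mathbb{pt}(\mathcal{E}))$, but the $\beta$-completeness of $\mathcal{E}$ is precisely the conclusion being sought, so invoking it there is circular. The adjunction-only argument of the paper sidesteps both problems.
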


\begin{war*}
Throughout the paper we intensely deploy both the general theory of Kan extensions in a $2$-category and the more specific theory of pointwise Kan extensions in $\mathsf{Cat}$. We refer to \cite[Chap. 6]{riehl2017category} as a general and solid reference for all the results we apply. The Appendix of  \cite{liberti2019codensity} can be used as a covenient cheat sheet for almost everything we need.
\end{war*} 
\section{Flat embeddings and coherent topoi} \label{sec1}

\subsection{Flat embeddings}

\begin{defn}
A geometric morphism $f : \mathcal{F} \to \mathcal{G}$ is \textit{flat} if its direct image $f_* : \mathcal{F} \to \mathcal{G}$ preserves finite colimits. 
\end{defn}

\begin{disc}
The notion of flat geometric morphism appeared for the first time in the theory of locales \cite[III.1.11]{johnstone1986stone}, and is apparently due to Joyal. It was later studied in few other papers by Johnstone \cite{johnstone1984wallman} and Isbell \cite{isbell1988flat}. Since the very beginning the notion was linked to that of coherent locale, and the connection has been made more and more precise in recent years \cite{escardo1999semantic, carvalho2017kan}. This paper benefited a lot from the geometric intuition provided by this literature on the theory of locales and we will soon revisit many of these classical results in a more topos-theoretic fashion.
\end{disc}

\begin{disc} \label{generalities2}
In topos theory, this notion only appeared twice. Once in the PhD thesis of Marmolejo and its subsequents papers \cite{marmolejo1995ultraproducts,marmolejo2005locale,marmolejo1998continuous}. There, flat morphisms go unnamed, even though they play a centrole role in the whole thesis. Marmolejo says that a continuous function between topological spaces is \textit{ultrafinite} if its associated morphism of localic topoi is flat in our sense. A geometric characterization of ultrafinite continuous functions can be found in \cite{marmolejo2005locale}. We will come back to Marmolejo's approach later in the paper as it is indeed essential for us. The other appearance of a flat geometric embedding in topos theory is in \cite[IX.10.4 and the discussion above]{sheavesingeometry}. While it is evident that the authors were informed of the localic interpretation of flatness, there is no contextualization in Moerdijk's and Mac Lane's work. All in all, this paper represents the first time the notion of flat geometric morphism \textit{of topoi} is formally introduced and studied.
\end{disc}

\begin{war}
We find the name \textit{flat} misleading, as it clashes with usual notion of flat functor which is unrelated -- at least to our knowledge -- and yet key in topos theory. We do not entirely understand the motivation that led to this choice even in the theory of locales but will stick to it both for the lack of a better alternative and to be consistent with the existing literature on the topic.
\end{war}

\begin{exa}[Every locale is a flat sublocale of a coherent one] \label{exampleindcompl}
Johnstone observes that every locale amits a flat geometric embedding into a coherent locale \cite[III.1.11]{johnstone1986stone}. This is given by the embedding of a locale $\mathcal{L}$ into its frame of ideals, \[i_{\mathcal{L}}: \mathcal{L} \hookrightarrow \mathsf{Idl}(\mathcal{L}).\]
\end{exa}

\begin{exa} \label{cameo}
Let us report on the example discussed by Moerdijk and Mac Lane from \cite[IX.10.4 and the discussion above]{sheavesingeometry}, both for the sake of discussing an example, and because it will turn out to be useful for our treatment. Let $B$ be a complete boolean algebra, being a locale we can apply the contruction above and embed it in its frame of ideals, $i_{\mathcal{B}}: \mathcal{B} \hookrightarrow \mathsf{Idl}(\mathcal{B}).$
In the special case of a complete boolean algebra, this is the same as the frame of opens of its Stone space, as Moerdijk and Mac Lane observe. Then, the induced geometric embedding of localic topoi is flat,

\[i_{\mathcal{B}}: \mathsf{Sh}(\mathcal{B}) \hookrightarrow \mathsf{Sh}(\mathsf{Idl}(\mathcal{B})).\]

Moerdijk and Mac Lane do not show the flatness of this geometric morphism as they only rely on the fact that $i_*$ preserve finite epimorphic families. The only available proof is due to \cite[Cor. 2.3]{marmolejo2005locale}. An essentially identical proof appears in \cite{marmolejo1995ultraproducts}, but it is much harder to read.
\end{exa}

Moerdijk and Mac Lane use this observation to show that every coherent topos has enough points, implementing a variation of Joyal's lemma which started this theory. We will generalise their result in a couple of subsections, but we shall start from recalling a polished version of Joyal's lemma and comment it for the sake of readability.

\begin{thm} \label{paradigm}
A coherent locale $\mathcal{C}$ is injective with respect to flat embeddings of locales.

\[\begin{tikzcd}
	{\mathcal{L}} & {\mathcal{C}} \\
	{\mathcal{L'}}
	\arrow["f", from=1-1, to=1-2]
	\arrow["i"', hook, from=1-1, to=2-1]
	\arrow[dashed, from=2-1, to=1-2]
\end{tikzcd}\]
\end{thm}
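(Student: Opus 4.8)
The plan is to dualize everything to frames and then exploit the universal property of the ideal completion, which is exactly the feature that characterizes coherent locales. Write $A = \mathcal{O}(\mathcal{L})$, $B = \mathcal{O}(\mathcal{L'})$ and $C = \mathcal{O}(\mathcal{C})$. A map $f : \mathcal{L} \to \mathcal{C}$ is a frame homomorphism $f^* : C \to A$; the embedding $i$ is a frame surjection $i^* : B \to A$ whose right adjoint $i_*$ satisfies $i^* i_* = \mathrm{id}_A$; and the first thing I would record is that flatness means precisely that $i_*$ preserves finite joins and the bottom element. (At the topos level this is the restriction of $i_*$ to subterminal objects: a right adjoint preserves the relevant meets and the initial object, and preserving the pushout computing $U \vee V$ from $U \wedge V$ then forces $i_*(U \vee V) = i_* U \vee i_* V$.) The structural point is that one must \emph{not} define the extension as $i_* f^*$ on the nose: this composite preserves finite meets, finite joins, $\top$ and $\bot$, but it generally destroys infinite joins, so it is only a homomorphism of distributive lattices, not of frames.

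Next I would bring coherence in. Let $D = K(C)$ be the distributive lattice of compact elements of $C$; coherence of $\mathcal{C}$ says exactly that $C \cong \mathsf{Idl}(D)$, i.e. every element is a directed join of compacts and $\mathsf{Idl}(-)$ is left adjoint to the forgetful functor from frames to distributive lattices. Restricting the composite along the inclusion $D \hookrightarrow C$ gives a map $i_* f^*|_D : D \to B$, and the crux is to check that this is a homomorphism of distributive lattices: $f^*$ preserves finite meets and finite joins, $i_*$ preserves finite meets as a right adjoint and finite joins by flatness, so the composite preserves $\top$, $\bot$ and binary meets and joins. Here flatness is doing all the work, since it is exactly what keeps finite joins alive along $i_*$.

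I would then invoke the universal property of the ideal completion: the lattice homomorphism $i_* f^*|_D : D \to B$ extends uniquely to a frame homomorphism $g^* : C \cong \mathsf{Idl}(D) \to B$, concretely $g^*(c) = \bigvee \{\, i_* f^*(k) : k \in D,\ k \le c \,\}$. This $g^*$ is the inverse image of the desired extension $g : \mathcal{L'} \to \mathcal{C}$. For commutativity $g \circ i = f$, that is $i^* g^* = f^*$, note that both sides are frame homomorphisms out of $C \cong \mathsf{Idl}(D)$, so by the uniqueness half of the universal property it suffices that they agree on the generating lattice $D$; and there $i^* g^*|_D = i^* i_* f^*|_D = f^*|_D$ because $i^* i_* = \mathrm{id}$.

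The main obstacle is not any single computation but the conceptual move just described: the naive candidate $i_* f^*$ fails to be a frame map precisely because it forgets infinite joins, and it must be \emph{corrected} by the ideal completion. Flatness guarantees that the finite-join data survive on the compacts $D$, while coherence ($C = \mathsf{Idl}(D)$) is what allows one to freely rebuild the infinite joins from that finite data; the embedding hypothesis $i^* i_* = \mathrm{id}$ is then exactly what is needed to retract back and obtain commutativity.
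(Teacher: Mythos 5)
Your proof is correct, but note that the paper never proves \Cref{paradigm} at all: it is recalled as Joyal's lemma with a citation to Johnstone's \emph{Stone Spaces} III.1.11, and your argument is essentially the classical one given there — dualize to frames, restrict $i_* f^*$ to the distributive lattice $D$ of compact elements (where flatness of $i_*$ keeps finite joins alive), freely extend along $C \cong \mathsf{Idl}(D)$, and retract with $i^* i_* = \mathrm{id}$. What is worth pointing out is that your proof is precisely the localic shadow of the proof the paper \emph{does} give for its generalisation, \Cref{mainth}: your restriction to compacts plays the role of the paper's embedding $j\colon \mathcal{E} \hookrightarrow \mathsf{Psh}(C)$ into presheaves on the coherent objects; your formula $g^*(c) = \bigvee \{\, i_* f^*(k) : k \le c \,\}$ is the poset incarnation of the Kan-extension formula $h_* \cong \mathsf{lan}_{i_*}(j_* x_*)$ of \Cref{h_*}; and your observation that flatness makes the family $\{ i_* f^*(k) : k \le c \}$ directed (since $D$ has finite joins and $i_* f^*$ preserves them) is exactly the paper's key step $(*)$, where flatness of $i_*$ makes the colimit filtered so that $j_*$, which preserves directed colimits by coherence, preserves the Kan extension and the extension lands in the subtopos. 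The trade-off is what you would expect: your route is elementary, purely order-theoretic, and yields strict commutativity $g \circ i = f$ from the uniqueness clause of the $\mathsf{Idl} \dashv U$ adjunction, but it has no room for the $2$-dimensional content; the paper's route is heavier but delivers right \emph{Kan} injectivity (the extension is a genuine right Kan extension in $\mathsf{Topoi}$, with its universal property), which is what the rest of the paper needs to extract the ultrastructure. One small caveat in your side remark: preservation of the initial object (the empty join) is part of the flatness hypothesis, not a consequence of $i_*$ being a right adjoint; the rest of that parenthetical — meets for free, binary joins via the pushout over the intersection — is the right way to see that topos-level flatness restricts to finite-join preservation on subterminals.
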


\begin{disc}
The theorem above is the most paradigmatic of this theory and shows clearly the connection between coherent locales and flat embeddings via injectivity. A more precise account on the topic which takes into account the $2$-dimensional aspects of the theory and connects it to Kan injectivity is provided in \cite{carvalho2017kan}. Because every locale admits a flat embedding into a coherent one, it follows that  stably locally compact locales (retracts of coherent locales \cite[C4.1]{Sketches}) are precisely the locales (right) Kan injective to flat embeddings (\cite[Rem. 4.3]{carvalho2017kan}),

\[\mathsf{SLC} = \mathsf{RInj(FlatEmb)}.\]
\end{disc}

\begin{disc}
Moerdijk and Mac Lane prove a topos-theoretic version of \Cref{paradigm}  (\cite[IX.11.1]{sheavesingeometry}) in the special case where $i$ is the flat geometric embedding of \Cref{cameo}. In the next subections we will revisit their theorem, offering the widest generalisation we could find. 

\end{disc}

\subsection{Kan injecivity}

This subsection briefly recalls Kan injectivity in a $2$-category and revisits the theory of injective topoi (with respect to geometric embeddings). We will use this technology in the next subsection to provide a generalization of \Cref{paradigm} to the $2$-category of topoi.
Kan injectivity was studied in the case of poset-enriched categories in \cite{escardo1997injective,escardo1998properly,escardo1999semantic,adamek2015kan} and recently generalised to $2$-categories in \cite{dlsolo}, we refer to the latter for the reader that is new to the concept. Below we recall an operative definition of Kan injectivity, specialised directly to the case of the $2$-category of topoi.

\begin{defn}[Kan injectivity, {\cite{dlsolo}}]
Let $f: \mathcal{F} \to \mathcal{G}$. We say that $\mathcal{E}$ is left (right) Kan injective with respect to $f$ if the functor \[(-) \circ f: \mathsf{Topoi}(\mathcal{G}, \mathcal{E}) \to \mathsf{Topoi}(\mathcal{F}, \mathcal{E})\] has a left (right) adjoint whose counit (unit) is an isomorphism.
\end{defn}

\begin{war}[$f^\sharp \dashv f_\sharp$]
It is useful to have a more compact notation for the functor $(-) \circ f: \mathsf{Topoi}(\mathcal{G}, \mathcal{E}) \to \mathsf{Topoi}(\mathcal{F}, \mathcal{E})$.  We shall call $f^\sharp$ such functor and $f_\sharp$ its right adjoint. Of course, other notations would be more traditional, but they would clash with the usual symbol for the inverse image functor.
\end{war}

\begin{war}[On right Kan extension in the $2$-category of topoi] \label{rightKanextension}
\[\begin{tikzcd}
	{\mathcal{F}} & {\mathcal{E}} \\
	{\mathcal{G}}
	\arrow["f"', from=1-1, to=2-1]
	\arrow["x", from=1-1, to=1-2]
	\arrow["{\mathsf{ran}_fx}"', dashed, from=2-1, to=1-2]
\end{tikzcd}\]
Recall from \cite{dlsolo} that $f_\sharp(x)$ is the right Kan extension $\mathsf{ran}_fx$ in the $2$-category of topoi (as opposed to the $2$-category of categories) and thus has the appropriate universal property in that $2$-category. We will see below that in some concrete instances (\Cref{h_*}), some counter-intuitive equations hold,  \[({\mathsf{ran}^{\mathsf{Topoi}}_fx})_* \cong \mathsf{lan}^{\mathsf{Cat}}_{f_*}x_*.\]
We do not know whether such an equation has to hold in general, but it is a great exemplification that one should be careful when manipulating these Kan extensions. 
\end{war}

\begin{war}[The $2$-category of topoi]
Because the $2$-dimensional structure is so important, let us recall what we intend by $\mathsf{Topoi}$, the $2$-category of Grothendieck topoi. It has objects Grothendieck topoi, $1$-cells geometric morphisms (in the direction of the right adjoint) and $2$-cells natural transformations between the inverse (!) images. 
\end{war}

\begin{defn}[{\cite{dlsolo}}]
A geometric morphism $g: \mathcal{E} \to \mathcal{D}$ between left (right) injective objects with respect to a morphism $f$ is left (right) injective with respect to a morphism $f$ if it preserves the left (right) Kan extension as below.

\[\begin{tikzcd}
	{\mathcal{F}} & {\mathcal{E}} & {\mathcal{D}} \\
	{\mathcal{G}}
	\arrow["f"', from=1-1, to=2-1]
	\arrow["x", from=1-1, to=1-2]
	\arrow["g", from=1-2, to=1-3]
	\arrow["{f_\sharp(x)}"{description}, from=2-1, to=1-2]
	\arrow["{f_{\sharp}(gx)}"', curve={height=12pt}, from=2-1, to=1-3]
\end{tikzcd}\]
\end{defn}

\begin{defn}[Relevant classes of flat morphisms: $\mathsf{Emb}_\beta \subset \mathsf{Emb}_\flat$] \label{classes}
We define two classes of geometric morphisms, one trivially contained in the other $\mathsf{Emb}_\beta \subset \mathsf{Emb}_\flat$. These two classes will be the main object of study of this paper.

\begin{itemize}
    \item[$(\mathsf{Emb}_\flat)$] contains flat embeddings,
    \item[$(\mathsf{Emb}_\beta)$] contains the flat embeddings of the form $\mathsf{Set}^X \to \mathsf{Sh}(\beta(X))$, where $X$ is a set and $\beta(X)$ is its space of ultrafilters. This is a special case of \Cref{cameo} where $B$ is the powerset $\mathcal{P}(X)$.
\end{itemize}
\end{defn}

\begin{notat}[$\beta$-complete and $\flat$-complete topoi] \label{complete}
Following \cite{dlsolo}, we call $\mathsf{RInj}(\mathsf{Emb}_\square)$ the locally full subcategory of topoi that are right Kan injective with respect to $\mathsf{Emb}_\square$. We shall call $\square$-complete a topos in $\mathsf{RInj}(\mathsf{Emb}_\square)$. We obtain a forgetful functor \[ \mathbb{U}_\beta^\flat: \mathsf{RInj}(\mathsf{Emb}_\flat) \to \mathsf{RInj}(\mathsf{Emb}_\beta).\]
\end{notat}

We now revisit a classical result from the theory of topoi, from the point of view of right Kan injectivity. 

\begin{prop}
Let $C$ be a category with finite limits. Then $\mathsf{Psh}(C)$ is right Kan injective with respect to embeddings of topoi.
\end{prop}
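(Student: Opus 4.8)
The plan is to make both sides of the putative adjunction completely explicit through Diaconescu's theorem, where the hypothesis that $C$ has finite limits turns out to be exactly what keeps every construction inside the world of geometric morphisms.

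First I would record the two standard inputs. By Diaconescu's theorem the hom-category $\mathsf{Topoi}(\mathcal{G}, \mathsf{Psh}(C))$ is, pseudonaturally in $\mathcal{G}$, equivalent to the category $\mathsf{Flat}(C, \mathcal{G})$ of flat functors $C \to \mathcal{G}$, the equivalence sending a geometric morphism $x$ to $x^* \circ y_C$ (restriction of the inverse image along Yoneda). Since $C$ has finite limits, a functor out of $C$ is flat precisely when it preserves finite limits, so $\mathsf{Flat}(C, \mathcal{G}) = \mathsf{Lex}(C, \mathcal{G})$. Under these identifications the restriction functor $f^\sharp = (-) \circ f$ becomes post-composition with the inverse image, $(f^* \circ -): \mathsf{Lex}(C, \mathcal{G}) \to \mathsf{Lex}(C, \mathcal{F})$, because $(x\circ f)^* \circ y_C = f^* \circ (x^* \circ y_C)$.

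Next I would produce the right adjoint. The candidate is post-composition with the direct image, $(f_* \circ -)$. The one thing that genuinely needs checking — and this is where the finite limits in $C$ are essential — is that it lands among flat functors: being a right adjoint, $f_*$ preserves all limits, so $f_* \circ F$ is again left exact, hence flat, whenever $F$ is, precisely because $C$ is left exact. The adjunction $f^* \dashv f_*$ then induces, by post-composition, an adjunction $(f^* \circ -) \dashv (f_* \circ -)$ between the two functor categories, that is, an adjunction $f^\sharp \dashv f_\sharp$ of the relevant hom-categories.

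Finally I would invoke the embedding hypothesis to secure the invertibility clause. Because $f$ is a geometric embedding, $f_*$ is fully faithful, equivalently the counit $f^* f_* \Rightarrow \mathrm{id}_{\mathcal{F}}$ is an isomorphism; post-composing it with a flat functor $F$ yields the comparison $2$-cell witnessing that the extension $f_\sharp x$ restricts back along $f$ to $x$, and its invertibility is exactly the isomorphism demanded by right Kan injectivity. I expect the only delicate point to be the bookkeeping of the second step — verifying that Diaconescu's equivalence is suitably pseudonatural, so that $f^\sharp$ really is identified with $(f^* \circ -)$ together with its action on $2$-cells, and that the post-composition adjunction is an honest adjunction in $\mathsf{Topoi}$ — whereas the flatness check and the use of the embedding are immediate once $C$ is assumed left exact. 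It is worth flagging that this clean description of the extension as $(f_* \circ -)$ is special to presheaf targets; for a general coherent target the right Kan extension is the subtler object warned about in \Cref{rightKanextension}.
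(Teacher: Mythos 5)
Your proof is correct and is essentially the paper's own argument in different clothing: the paper's explicit extension $h^* = \mathsf{lan}_y(i_*x^*y)$ is precisely the geometric morphism corresponding, under the Diaconescu equivalence you invoke, to the lex functor $i_*\circ(x^*y)$, i.e.\ to post-composition of the classifying lex functor with the direct image, which is your candidate for $f_\sharp$. The only real difference is bookkeeping: you transport the whole adjunction across Diaconescu's theorem, making explicit the verifications (adjointness induced by $f^*\dashv f_*$, invertibility of the comparison $2$-cell from full faithfulness of $f_*$) that the paper compresses into ``it is easy to check.''
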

\begin{proof}
This is shown in \cite[Prop. 1.2]{johnstone1981injective}. It will be later useful though to provide explicitly the construction of the right adjoint and to inspect its properties. In order to do so, consider the diagram below where of course $i$ is an embedding and $x$ is any geometric morphism. 

\[\begin{tikzcd}
	{\mathcal{F}} & {\mathsf{Psh}(C)} \\
	{\mathcal{G}}
	\arrow["i"', hook, from=1-1, to=2-1]
	\arrow["x", from=1-1, to=1-2]
	\arrow["h"', dashed, from=2-1, to=1-2]
\end{tikzcd}\]

We shall describe the construction for $h$. Call $y: C \to \mathsf{Psh}(C)$ the Yoneda embedding. We define, \[h^* = \mathsf{lan}_{y}(i_*x^*y).\] It is clear that $h^*$ is cocontinuous and lex, because $i_*x^*y$ is a lex functor and it is easy to check that the geometric morphism induced by $h^*$ has the desired properties.
\end{proof}

\begin{rem}[A description for $h_*$] \label{h_*}
We note en passant that the definition of $h^*$ provides a concrete description of its right adjoint by the following chain of isomorphisms,

\begin{align*}
    h_* \cong & \mathsf{lan}_{h^*}(1) \\
        \cong & \mathsf{lan}_{h^*}(\mathsf{lan}_y y) \\
        \cong & \mathsf{lan}_{h^*y}(y) \\
        \cong & \mathsf{lan}_{(\mathsf{lan}_{y}(i_*x^*y)y)}(y) \\
        \cong & \mathsf{lan}_{i_*x^*y}(y) \\
         \cong & \mathsf{lan}_{i_*x^*}(\mathsf{lan}_{y}(y)) \\
              \cong & \mathsf{lan}_{i_*}(x_*) \\
\end{align*}
\end{rem}

\subsection{Coherent topoi are Kan injective with regard to flat embeddings}

We now deliver our generalisation of \Cref{paradigm}.

\begin{thm} \label{mainth}
Coherent topoi are right Kan injective with respect to flat embeddings.
\end{thm}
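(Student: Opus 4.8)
The plan is to prove that a coherent topos $\mathcal{E}$ is right Kan injective with respect to any flat embedding $i : \mathcal{F} \hookrightarrow \mathcal{G}$ by reducing to the localic/presheaf situation already understood in \Cref{paradigm} and the preceding proposition on $\mathsf{Psh}(C)$. The starting observation is that a coherent topos has a canonical presentation $\mathcal{E} \simeq \mathsf{Sh}(C, J)$ where $C$ is a finitely complete (coherent) site, equivalently $\mathcal{E}$ sits as a subtopos of $\mathsf{Psh}(C)$ via a geometric embedding $k : \mathcal{E} \hookrightarrow \mathsf{Psh}(C)$ whose reflector (the sheafification $a$) is lex. Since $\mathsf{Psh}(C)$ is already known to be right Kan injective with respect to \emph{all} geometric embeddings, and in particular with respect to the flat embedding $i$, the main work is to transfer this injectivity along the retraction-like data relating $\mathcal{E}$ and $\mathsf{Psh}(C)$, using flatness of $i$ to guarantee that the relevant right Kan extension lands back inside $\mathcal{E}$.

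Concretely, given a geometric morphism $x : \mathcal{F} \to \mathcal{E}$, I would first form the right Kan extension $i_\sharp(k x) = \mathsf{ran}^{\mathsf{Topoi}}_i(k x) : \mathcal{G} \to \mathsf{Psh}(C)$ supplied by injectivity of the presheaf topos, together with its unit isomorphism $i_\sharp(kx) \circ i \cong kx$. Using the explicit formula from \Cref{h_*}, its direct image is computed as $(i_\sharp(kx))_* \cong \mathsf{lan}^{\mathsf{Cat}}_{i_*}((kx)_*)$, a pointwise left Kan extension in $\mathsf{Cat}$. The crucial step is then to verify that this candidate factors through $\mathcal{E}$, i.e.\ that the composite $k_\sharp i_\sharp(kx)$ defines a geometric morphism $\mathcal{G} \to \mathcal{E}$ whose restriction along $i$ recovers $x$. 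Here is where flatness of $i$ enters decisively: because $i_*$ preserves finite colimits, the left Kan extension along $i_*$ interacts well with the lex reflector defining $\mathcal{E}\hookrightarrow\mathsf{Psh}(C)$, so that the coherent (i.e.\ finitely-presentable/compact) objects of $\mathcal{E}$ are sent to genuine objects of $\mathcal{E}$ rather than merely of $\mathsf{Psh}(C)$. This mirrors exactly the localic argument of Joyal (\Cref{paradigm}), where flatness of the embedding $\mathcal{L}\hookrightarrow\mathsf{Idl}(\mathcal{L})$ is what allows the extension of a map into a coherent locale to be defined on compact opens and then extended by taking directed joins.

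I would carry out the verification by checking the two halves of right Kan injectivity separately. For the \emph{existence} half, I construct $f_\sharp(x) := \mathcal{G} \to \mathcal{E}$ as above and exhibit the comparison $2$-cell; the unit isomorphism $f_\sharp(x) \circ i \cong x$ follows from the corresponding isomorphism in $\mathsf{Psh}(C)$ together with the fact that $k$ is a (fully faithful) embedding, so that restricting the presheaf-level extension along $i$ and reflecting back into $\mathcal{E}$ returns $x$ up to canonical isomorphism. For the \emph{universality} half, I would show that $(-)\circ i : \mathsf{Topoi}(\mathcal{G},\mathcal{E}) \to \mathsf{Topoi}(\mathcal{F},\mathcal{E})$ admits $f_\sharp$ as right adjoint with invertible unit; this amounts to checking the universal property of the right Kan extension \emph{within} $\mathsf{Topoi}$, which one deduces from the universal property already available in $\mathsf{Topoi}$ at the level of $\mathsf{Psh}(C)$ by restricting along the embedding $k$, again using that flatness keeps everything inside the coherent subtopos.

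The main obstacle I anticipate is precisely the step of showing that the presheaf-level right Kan extension $i_\sharp(kx)$ descends to $\mathcal{E}$, rather than merely living in $\mathsf{Psh}(C)$. The subtlety flagged in \Cref{rightKanextension}, namely the counter-intuitive identity $(\mathsf{ran}^{\mathsf{Topoi}}_i x)_* \cong \mathsf{lan}^{\mathsf{Cat}}_{i_*} x_*$, warns that these $2$-categorical Kan extensions do not behave like their $\mathsf{Cat}$-level namesakes, so one must argue at the level of inverse images (which must be shown lex and cocontinuous) and use the coherence of $\mathcal{E}$ in an essential way to control the interaction between the finite-colimit-preservation of $i_*$ and the sheaf condition defining $\mathcal{E}$. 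I expect this to be the technical heart of the proof, and the point where the hypothesis ``coherent'' cannot be weakened.
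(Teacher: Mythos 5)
Your proposal follows the same global strategy as the paper's proof: embed $\mathcal{E}$ into a presheaf topos over a category of coherent objects, invoke right Kan injectivity of $\mathsf{Psh}(C)$, show the resulting extension descends to $\mathcal{E}$, and transfer the adjunction back along the fully faithful restriction functors induced by the embedding. However, the step you yourself flag as the technical heart is left unresolved, and the mechanism you sketch for it points in the wrong direction. You claim that flatness of $i$ makes the Kan extension ``interact well with the lex reflector'' so that ``coherent objects of $\mathcal{E}$ are sent to genuine objects of $\mathcal{E}$''; but this is not the statement that needs proving (the issue is whether the objects $(i_\sharp(kx))_*(y)$, for $y$ in $\mathcal{G}$, are sheaves, not where coherent objects go), and sheafification plays no role in the correct argument. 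The missing ingredient is a property of the \emph{direct image} $k_*$, not of the reflector: the site $C$ must be chosen to be the full subcategory of coherent objects of $\mathcal{E}$, precisely so that $k_* : \mathcal{E} \to \mathsf{Psh}(C)$ preserves directed colimits \cite[III.1.1(2)]{moerdijk2000proper}. Your proposal never states or uses this fact, and for an arbitrary coherent site the descent step has no reason to go through.

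With that ingredient the descent becomes a short computation. By \Cref{h_*}, $h_* \cong \mathsf{lan}_{i_*}(k_*x_*)$, a pointwise Kan extension whose value at $y$ is the colimit of $k_*x_*(d)$ indexed by the comma category $i_* \downarrow y$. Flatness of $i$ (i.e.\ $i_*$ preserving finite colimits) makes this comma category filtered; hence $k_*$, preserving filtered colimits, commutes with the Kan extension, giving $k_*\mathsf{lan}_{i_*}(x_*) \cong \mathsf{lan}_{i_*}(k_*x_*)$. Combining this with $k^*k_* \cong 1$ (as $k$ is an embedding) yields $k_*k^*h_* \cong h_*$, i.e.\ every object in the image of $h_*$ is already a sheaf, so $h$ factors through $\mathcal{E}$. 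This is where coherence is genuinely used --- through the existence of a site of definition whose associated embedding has a filtered-colimit-preserving direct image --- and it is exactly the interplay of that preservation property with the filteredness supplied by flatness, rather than any interaction with the lex reflector, that closes the argument. Your existence/universality split and the final adjunction transfer via full faithfulness of restriction along $k$ do match the paper's proof; it is only this central lemma that your write-up would fail to deliver as stated.
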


\begin{proof}
We start by saying that a coherent topos $\mathcal{E}$ admits a geometric embedding into a presheaf topos over a lex category $j: \mathcal{E} \hookrightarrow \mathsf{Psh}(C)$. Moeover, $C$ can be chosen to be the full subcategory of coherent objects, so that the direct image of the geometric embedding preserve directed colimits \cite[III.1.1(2)]{moerdijk2000proper}. In the diagram below, let $i$ be a flat embedding of topoi and $x$ be any geometric morphism.

\[\begin{tikzcd}
	{\mathcal{L}} & {\mathcal{E}} \\
	{\mathcal{L'}} & {\mathsf{Psh}(C)}
	\arrow["x", from=1-1, to=1-2]
	\arrow["i"', hook, from=1-1, to=2-1]
	\arrow["j", hook, from=1-2, to=2-2]
	\arrow["h"', dashed, from=2-1, to=2-2]
\end{tikzcd}\]

Because $\mathsf{Psh}(C)$ is right Kan injective, $h$ exists. By \Cref{h_*}, we have a formula to compute its right adjoint which we recall, \[h_*  \cong \mathsf{lan}_{i_*}(j_*x_*).\]
To construct $i^\mathcal{E}_\sharp(x)$ we show that $j_*j^*h_* \cong h_*,$ indeed this implies that any such an $h$ factors through $\mathcal{E}$ because it means every object in the image of $h_*$ is automatically a $j$-sheaf. 
\begin{align*}
    j_*j^*h_*  \cong & j_*j^*\mathsf{lan}_{i_*}(j_*x_*) \\
                \cong & j_*\mathsf{lan}_{i_*}(j^*j_*x_*) \\
                  \cong & j_*\mathsf{lan}_{i_*}(x_*) \\
       (*)           \cong & \mathsf{lan}_{i_*}(j_*x_*) \\
                 \cong & h_*. \\
\end{align*}
We should justify the equation $(*)$, i.e. why $j_*$ preserves the Kan extension $\mathsf{lan}_{i_*}(x_*)$. To do so, we write down the formula to compute the Kan extension explicitly. Let us do that,

\[j_*\mathsf{lan}_{i_*}(x_*)(y) \cong j_*(\underset{i_*(d) \to y}{\mathsf{colim}} x_*(d))\] 

Now, because $i_*$ preserve finite colimits, the diagram indexing the colimit is filtered, and thus is preserved by $j_*$. But this is the same as saying that $j_*$ preserves the Kan extension.

\[j_*\mathsf{lan}_{i_*}(x_*)(y) \cong j_*(\underset{i_*(d) \to y}{\mathsf{colim}} x_*(d)) \cong   \underset{i_*(d) \to y}{\mathsf{colim}} j_*x_*(d)  \cong \mathsf{lan}_{i_*}(j_*x_*)(y).\] 

The discussion above shows that we can indeed define a functor $i^\mathcal{E}_{\sharp}$ in the diagram below with the property that $j_\mathcal{L'}^{\bullet}i^\mathcal{E}_{\sharp} \cong i^{\mathsf{Psh}(C)}_{\sharp}j^\bullet_{\mathcal{L}}$, which is a convoluted way to say that $i^{\mathsf{Psh}(C)}_{\sharp}j^\bullet_{\mathcal{L}}$ actually lands in $\mathsf{Topoi}(\mathcal{L'}, \mathcal{E})$. 

\[\begin{tikzcd}
	{\mathsf{Topoi}(\mathcal{L}, \mathcal{E})} && {\mathsf{Topoi}(\mathcal{L}, \mathsf{Psh}(C))} \\
	\\
	{\mathsf{Topoi}(\mathcal{L}', \mathcal{E})} && {\mathsf{Topoi}(\mathcal{L}', \mathsf{Psh}(C))}
	\arrow[""{name=0, anchor=center, inner sep=0}, "{i^{\mathsf{Psh}(C)}_\sharp}", curve={height=-12pt}, from=1-3, to=3-3]
	\arrow[""{name=1, anchor=center, inner sep=0}, "{i_{\mathsf{Psh}(C)}^\sharp}", curve={height=-12pt}, from=3-3, to=1-3]
	\arrow["{i_{\mathcal{E}}^\sharp}", curve={height=-12pt}, from=3-1, to=1-1]
	\arrow["{j_\mathcal{L}^\bullet}", from=1-1, to=1-3]
	\arrow["{j_\mathcal{L'}^\bullet}", from=3-1, to=3-3]
	\arrow["{i^\mathcal{E}_\sharp}", curve={height=-18pt}, dashed, from=1-1, to=3-1]
	\arrow["\dashv"{anchor=center}, draw=none, from=1, to=0]
\end{tikzcd}\]

We still have to show that $i^\sharp_\mathcal{E} \dashv i_\sharp^\mathcal{E}$. To do so, we will use the fact that both the functors $j^\bullet_{\square}$ are fully faithful (because $j$ is in first place). Indeed, it is enough to follow the following chain of equivalences to finish the proof. (We simplified the notation for the sake of readability).

\begin{align*}
    (\mathcal{L},\mathcal{E})(i^\sharp_\mathcal{E}-,-) \simeq & (\mathcal{L},\mathsf{Psh}(C))(j^\bullet_\mathcal{L}i^\sharp_\mathcal{E}-,j^\bullet_\mathcal{L}-) \\ 
    \simeq & (\mathcal{L},\mathsf{Psh}(C))(i^\sharp_{\mathsf{Psh}(C)}j^\bullet_\mathcal{L'}-,j^\bullet_\mathcal{L}-) \\
    \simeq & (\mathcal{L}',\mathsf{Psh}(C))(j^\bullet_\mathcal{L'}-,i_\sharp^{\mathsf{Psh}(C)}j^\bullet_\mathcal{L}-) \\
    \simeq & (\mathcal{L}',\mathsf{Psh}(C))(j^\bullet_\mathcal{L'}-,j_\mathcal{L'}^{\bullet}i^\mathcal{E}_{\sharp}-) \\
  \simeq &  (\mathcal{L}',\mathcal{E})(-,i^\mathcal{E}_{\sharp}-). \\
\end{align*}
\end{proof}

\begin{rem}
Our result improves \cite[IX.11.1]{sheavesingeometry} in two directions:
\begin{itemize}
    \item we showed that coherent topoi are injective with respect to all flat embeddings, as opposed to a very specific kind.
    \item we showed that coherent topoi are Kan (!) injective, which of course is a sharper property than being simply injective.
\end{itemize}
\end{rem}

\begin{rem}[A comment on the proof] \label{morgan}
Another way to state the theorem above is to say that flat embeddings of topoi are  \textit{orthogonal on the left} to relatively tidy embeddings of topoi into a presheaf topoi over lex categories. This may seem as a convoluted way of stating the theorem, but opens the door to a new theory of orthogonality. It would be interesting for example to know whether in the proof above $j$ can land into any Kan injective with respect to embeddings. On a similar note, it would be interesting to see how this theorem changes if we replace the notion of flatness, for example with that of purity (see \cite[III.5]{moerdijk2000proper}).
\end{rem}

\begin{rem}[Another comment on the proof]
One can look at our proof as a very anatomic analysis of \cite[IX.11.1]{sheavesingeometry}. Besides being more general, our proof has the perk of highlighting the key moments where the hypotheses are used, in such a way that the proof is parametric in those assumptions. For example, if $i_*$ was only preserving coproducts, a similar proof would work for $j_*$ preserving sifted colimits.
\end{rem}

\begin{rem}[Towards a characterization of right Kan injectives with respect to flat embeddings] \label{characterisation}

As we mentioned above, it is observed in \cite{carvalho2017kan} that right Kan injectives with respect to flat embeddings of locales are precisely coreflections of coherent locales. This leads to the simple conjecture that such a result could be true also for coherent topoi, which would set a perfect matching between coherent topoi and flat embeddings. In the case of locales it is easy to derive that every right Kan injective with respect to flat embeddings is a coreflection of a coherent locale because every locale flatly embeds in a coherent one (\Cref{exampleindcompl}). This does not seem to be true though for the theory of topoi for two reasons. \begin{enumerate}
    \item  Because every topos is localic with respect to the classifier of the theory of objects $\mathsf{Set}[\mathbb{O}]$, we could try and simulate the proof of \Cref{exampleindcompl} from \cite[IX.10.3]{sheavesingeometry} for the case of locales internal to $\mathsf{Set}[\mathbb{O}]$, which would deliver the proof that every topos flatly embeds in a coherent one. Unfortunately the proof seem to rely on the use of choice, which fails in  $\mathsf{Set}[\mathbb{O}]$.
    \item For a topos $\mathcal{E}$, there is indeed an analog of the ideal-construction which is the $\mathsf{Ind}$-completion of a category, \[i: \mathcal{E} \to \mathsf{Ind}(\mathcal{E}).\]
     $i$ has both conceptually and technically the same behavior of the ideal-construction. Indeed $i$ preserves finite colimits, and one could show (with some work on size issues) that it has a lex left adjoint. Unfortunately though, $\mathsf{Ind}(\mathcal{E})$ is seldom a topos\footnote{Or even an infinitary-pretopos.} (see \cite{borceux1999left})!  One should say though that in the spirit of \cite{borceux1999left} it is believable that every \textit{noetherian} topos flatly embeds in a coherent one, and this observation would deserve further investigation. Could this be a characterization of noetherian topoi?
\end{enumerate} 
All in all, with the technology that is available today, we cannot characterise the topoi that are right Kan injective with respect to flat embeddings. Notice that this would be an extremely interesting task, as all these topoi must have enough points. The best result we can deliver follows.
\end{rem}

\begin{prop} \label{char2} Let $\mathcal{E}$ be a $\flat$-complete topos. Then $\mathcal{E}$ admits a geometric surjection from a coherent spatial topos. (In particular it has enough points.)

\end{prop}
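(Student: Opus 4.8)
The plan is to present $\mathcal{E}$ as the target of a surjection out of the coherent spatial topos obtained by flatly completing a \emph{localic cover} of $\mathcal{E}$, and then to read the surjection off from $\flat$-completeness. The only genuinely external inputs are the existence of a localic cover and the spatiality of coherent locales; everything else is formal manipulation of right Kan injectivity.

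First I would invoke the Joyal--Tierney localic cover (see \cite{Sketches}): every Grothendieck topos admits a geometric surjection
\[ p : \mathsf{Sh}(L) \twoheadrightarrow \mathcal{E} \]
out of a localic topos $\mathsf{Sh}(L)$, and recall that this means precisely that the inverse image $p^{*}$ is faithful, equivalently conservative (we do not need the cover to be open). Next I would flatly complete this cover: applying the ideal construction of \Cref{exampleindcompl} to the locale $L$ yields a flat embedding $L \hookrightarrow \mathsf{Idl}(L)$ into a coherent locale, hence a flat geometric embedding of topoi
\[ i : \mathsf{Sh}(L) \hookrightarrow \mathsf{Sh}(\mathsf{Idl}(L)). \]
Because $\mathsf{Idl}(L)$ is coherent, the prime ideal theorem guarantees that it is spatial (\cite[II.3]{johnstone1986stone}); writing $X$ for its spectral space of points we obtain $\mathsf{Sh}(\mathsf{Idl}(L)) \simeq \mathsf{Sh}(X)$, a coherent spatial topos.

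Now I would use the hypothesis that $\mathcal{E}$ is $\flat$-complete (\Cref{complete}): it is right Kan injective with respect to the flat embedding $i$, so $i^{\sharp}$ admits a right adjoint $i_{\sharp}$ whose unit is invertible. Setting $q := i_{\sharp}(p) : \mathsf{Sh}(X) \to \mathcal{E}$, invertibility of the unit at $p$ supplies an isomorphism of geometric morphisms $q \circ i \cong p$, and in particular $i^{*}q^{*} \cong p^{*}$. It then remains to observe that $q$ is a surjection, which follows because conservativity passes to the last component of a surjective composite: if $g$ is a morphism of $\mathcal{E}$ with $q^{*}g$ invertible, then $p^{*}g \cong i^{*}q^{*}g$ is invertible, so $g$ is invertible as $p^{*}$ is conservative. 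Hence $q : \mathsf{Sh}(X) \twoheadrightarrow \mathcal{E}$ is a geometric surjection from a coherent spatial topos. Since $\mathsf{Sh}(X)$ is spatial it has enough points, and these compose with $q$ to a jointly conservative family of points of $\mathcal{E}$ (again using that $q^{*}$ is conservative), giving the parenthetical claim.

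The individual steps are short, so the weight of the argument sits in the two imported facts. The \emph{main obstacle} is securing the localic surjection $p$, i.e. the Joyal--Tierney theorem; it is the only deep ingredient, after which flatness, right Kan injectivity and conservativity combine purely formally. A secondary subtlety is the spatiality of $\mathsf{Idl}(L)$, which consumes a weak form of choice through the prime ideal theorem --- the same place where the classical localic prototype \Cref{paradigm} already appeals to choice. Finally, one must keep the variance of right Kan injectivity straight: it extends $p$ \emph{along} $i$, producing $q$ with $q\circ i \cong p$ rather than a factorisation through $\mathsf{Sh}(L)$, and it is exactly this that exhibits $q$ as the surjective last component of $p$.
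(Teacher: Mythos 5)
Your proof is correct and follows essentially the same route as the paper's: take a localic cover of $\mathcal{E}$ (the paper uses the Diaconescu cover, you use the Joyal--Tierney cover, an immaterial difference since neither proof uses any special property of the cover beyond surjectivity), flatly embed its underlying locale into its ideal completion as in \Cref{exampleindcompl}, extend the cover along this flat embedding using $\flat$-completeness, and conclude by cancellation of surjectivity. Your explicit verifications that the coherent localic topos is spatial (via the prime ideal theorem) and that the extension is surjective (via conservativity of inverse images) fill in details the paper's proof leaves implicit.
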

\begin{proof}
Let $\mathcal{L}$ be the Diaconescu cover of $\mathcal{E}$ (\cite[Thm. 4.1]{johnstone1985general}), and consider the diagram below, where $i: \mathcal{L} \to \mathcal{B}$ is constructed as in \Cref{exampleindcompl}. Recall that $q$ can be chosen to be connected and locally connected.

\[\begin{tikzcd}
	{\mathcal{L}} & {\mathcal{E}} \\
	{\mathcal{B}}
	\arrow["q", two heads, from=1-1, to=1-2]
	\arrow["i"', hook, from=1-1, to=2-1]
	\arrow["h"', dashed, two heads, from=2-1, to=1-2]
\end{tikzcd}\]

$\mathcal{B}$ is coherent by design. Because $\mathcal{E}$ is right Kan injective with respect to flat embeddings, $h$ exists and because $q$ is a surjection, so must be $h$.
\end{proof}

\begin{rem}
In the proof above, because $i$ is dense, it is tempting to believe that the map $h$ is open too, possibly applying some version of \cite[C3.1.14(ii)]{Sketches}, but we are skeptical about such a result, as it would seem very strong on a conceptual level. We did not manage to prove nor disprove such a statement.
\end{rem}

\begin{rem}[A first encounter with Marmolejo]
In \cite{marmolejo1995ultraproducts} introduces the highly nontrivial framework of \textit{Łoś categories} to organise the fact that coherent topoi are $\beta$-complete. His point of view is different from ours, even though it really has the same motivations, which is to better understand the notion of ultracategory. 
\end{rem}

\subsection{Spartan Geometric Morphisms}

In this subsection we introduce a convenient notion of morphism between coherent topoi, namely \textit{spartan geometric morphisms}. On a conceptual level, we will see that spartan morphisms are the geometric counterpart of morphism of pretopoi of coherent objects. The main result of the subsection is that spartan geometric morphisms are right Kan injective with respect to $\mathsf{Emb}_\flat$.

\begin{defn}[Spartan geometric morphism]
A geometric morphism $f: \mathcal{E} \to \mathcal{D}$ is spartan if its direct image $f_*$ preserve directed colimits. We define $\mathsf{CohTopoi}_\mathcal{S}$ to be the sub $2$-category of $\mathsf{Topoi}$ containing coherent topoi and spartan geometric morphisms.
\end{defn}

\begin{rem}
Spartan geometric morphisms are the same as ($\mathsf{Set}$)-relatively tidy geometric morphisms in the sense of \cite{moerdijk2000proper}. For the sake of this paper we prefer the terminology \textit{spartan} because we do not focus on a relative point of view. Also, the terminology is just shorter.
\end{rem}

\begin{exa} \label{innocente} There are two important class of examples of spartan geometric morphisms in nature.
\begin{itemize}
    \item Tidy geometric morphisms are spartan, because they are relatively tidy.
    \item Coherent geometric morphisms between coherent topoi are spartan \cite[V.3.2]{moerdijk2000proper}.
\end{itemize}
\end{exa}

\begin{prop} \label{moderatesarekan}
Spartan geometric morphisms $f: \mathcal{E} \to \mathcal{D}$ between $\flat$-complete topoi are right Kan injective with respect to flat geometric embeddings.
\end{prop}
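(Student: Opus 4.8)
The plan is to reduce the assertion to the single computation that already powers the proof of \Cref{mainth}: the relevant Kan extensions are computed by a pointwise \emph{filtered} colimit, and spartan morphisms preserve such colimits. Fix a flat embedding $i : \mathcal{L} \hookrightarrow \mathcal{L}'$ together with an arbitrary geometric morphism $x : \mathcal{L} \to \mathcal{E}$. Since both $\mathcal{E}$ and $\mathcal{D}$ are $\flat$-complete, the right Kan extensions $i_\sharp^{\mathcal{E}}(x)$ and $i_\sharp^{\mathcal{D}}(fx)$ exist, and the universal property of the latter supplies a canonical comparison $f \circ i_\sharp^{\mathcal{E}}(x) \to i_\sharp^{\mathcal{D}}(fx)$. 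By the definition of a right Kan injective geometric morphism, what must be proven is exactly that this comparison is an equivalence for every such $i$ and $x$. As a geometric morphism is determined up to isomorphism by its direct image, it suffices to test the comparison after applying $(-)_*$.

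At this point I would bring in the explicit description of the direct image of a right Kan extension. By \Cref{h_*} and the computation carried out in \Cref{mainth} --- where the embedding $j$ into a presheaf topos has a fully faithful direct image that can be cancelled --- the direct image of $i_\sharp(x)$ is the pointwise left Kan extension of $x_*$ along $i_*$,
\[ \bigl(i_\sharp(x)\bigr)_* \cong \mathsf{lan}_{i_*}(x_*). \]
Under this identification the comparison above is carried to the canonical arrow relating $\mathsf{lan}_{i_*}(f_* x_*)$ and $f_* \circ \mathsf{lan}_{i_*}(x_*)$, so that the whole statement collapses to the isomorphism
\[ f_* \circ \mathsf{lan}_{i_*}(x_*) \;\cong\; \mathsf{lan}_{i_*}(f_* x_*), \]
which is precisely the statement that $f_*$ preserves the left Kan extension $\mathsf{lan}_{i_*}(x_*)$.

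This is where the two hypotheses combine. Spelling out the pointwise formula, $\mathsf{lan}_{i_*}(x_*)(y) \cong \underset{i_*(d) \to y}{\mathsf{colim}}\, x_*(d)$, with the colimit ranging over the comma category of $i_*$ over $y$. Exactly as in \Cref{mainth}, flatness of $i$ means that $i_*$ preserves finite colimits, which forces this comma category to be filtered. Because $f$ is spartan, $f_*$ preserves directed --- equivalently filtered --- colimits, and hence commutes with this colimit pointwise, yielding $f_* \circ \mathsf{lan}_{i_*}(x_*) \cong \mathsf{lan}_{i_*}(f_* x_*)$. This is the preservation we reduced to, so $f$ is right Kan injective with respect to $i$.

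The step I expect to be delicate is the direct-image formula invoked above. For coherent topoi it is guaranteed by \Cref{mainth} through the presheaf embedding whose direct image preserves directed colimits; for a general $\flat$-complete topos, not known to be coherent (cf.\ \Cref{characterisation}), one must independently check that the pointwise filtered colimit $\mathsf{lan}_{i_*}(x_*)$ is again the direct image of a geometric morphism realising $i_\sharp^{\mathcal{E}}$, a point about which \Cref{rightKanextension} rightly urges caution. Everything else in the argument is formal once this identification is secured.
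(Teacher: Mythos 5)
Your proposal is correct and follows essentially the same route as the paper's own proof: both reduce the claim to the identification $(i_\sharp(x))_* \cong \mathsf{lan}_{i_*}(x_*)$ and then observe that the pointwise colimit computing this Kan extension is filtered (by flatness of $i$), hence preserved by the direct image of a spartan morphism. The delicate point you flag at the end is genuine but is shared by the paper: its proof also imports that identification from \Cref{mainth}, where it is established only via the presheaf embedding available for coherent topoi, so for a general $\flat$-complete topos the paper leaves open exactly the same verification you identify.
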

\begin{proof}
We shall focus on the diagram below, where $i$ is a flat embedding. We will show that $f i_\sharp (x) \cong i_\sharp(fx).$

\[\begin{tikzcd}
	{\mathcal{F}} & {\mathcal{E}} & {\mathcal{D}} \\
	{\mathcal{G}}
	\arrow["i"', from=1-1, to=2-1]
	\arrow["x", from=1-1, to=1-2]
	\arrow["f", from=1-2, to=1-3]
	\arrow["{i_\sharp(x)}"{description}, from=2-1, to=1-2]
	\arrow["{i_{\sharp}(fx)}"', curve={height=12pt}, from=2-1, to=1-3]
\end{tikzcd}\]

To do so, we go back to the proof of \Cref{mainth} to have a better description of $i_\sharp(x)$. In the notation of that proof we observe that $i_\sharp(x)_*$ coincides with $j^*h_*$, and thus we obtain the equation, \[(i_\sharp(x))_* \cong \mathsf{lan}_{i_*}x_*.\] This essentially finishes the proof, which we reduce to the following chain of isomorphisms. As in the proof of \Cref{mainth}, the non-trivial  isomorphism (*) comes from the fact that $i_*$ preserve finite colimits and thus the diagram is filtered.

\begin{align*}
    (f i_\sharp (x))_*(y) \cong & f_*\mathsf{lan}_{i_*}(x_*)(y) \\
        \cong & f_*(\underset{i_*(z) \to y}{\mathsf{colim}} x_*(z)) \\
     (*)   \cong &  \underset{i_*(z) \to y}{\mathsf{colim}} f_*x_*(z) \\
        \cong & \mathsf{lan}_{i_*}(f_*x_*)(y) \\
        \cong & (i_\sharp(fx))_*(y). \\
\end{align*}
\end{proof}

We organise the results of this short subsection in the diagram below.

\[\begin{tikzcd}
	{\mathsf{RInj}(\mathsf{Emb}_\flat)} && {\mathsf{RInj}(\mathsf{Emb}_\beta)} \\
	& {\mathsf{CohTopoi}_{\mathcal{S}}} \\
	& {\mathsf{CohTopoi}} \\
	& {\mathsf{Topoi}}
	\arrow["{\mathbb{U}_\beta^\flat}"{description}, from=1-1, to=1-3]
	\arrow[dashed, hook', from=2-2, to=1-1]
	\arrow[dashed, hook, from=2-2, to=1-3]
	\arrow[from=2-2, to=3-2]
	\arrow[from=3-2, to=4-2]
	\arrow[curve={height=-12pt}, from=1-3, to=4-2]
	\arrow[curve={height=12pt}, from=1-1, to=4-2]
\end{tikzcd}\]

\section{The emergence of ultrastructures à la Makkai} \label{sec2}
At the moment there exist three approaches to ultracategories and ultrastructures in the literature \cite{makkai1987stone,lurieultracategories,marmolejo1995ultraproducts}. The aim of this section is to re-frame the existing knowledge in a cleaner setup and describe a new approach to the topic. Besides the technical choices of the authors, an ultrastructure on an category is meant to ackwledge the fact that, given a coherent theory $\mathbb{T}$, a set $X$, and an ultrafilter $U \in \beta(X)$, one can define the functor \[\int_X (-)dU: \mathsf{Mod}(\mathbb{T})^X \to \mathsf{Mod}(\mathbb{T}).\] 

Such functor takes an $X$-indexed family of models and computes the ultraproduct of those along the ultrafilter $U$. Our first task will be to reconstruct this functor from a more conceptual point of view, and see how it fits together with the theory of coherent topoi and injectivity with respect to flat embeddings.

\begin{constr}[The emergence of ultrastructures for $\beta$-complete topoi] \label{emergentultrastructures}
Consider now a map in $\mathsf{Emb}_\beta$ from \Cref{classes}. We shall recall some notation for the sake of readability. Let $X$ be a set and $i_X: X \to \beta(X)$ be the inclusion of $X$ (seen as a discrete space) in its space of ultrafilters. At the level of frames, this correspond to the morphism $i_{\mathcal{P}X} : \mathcal{P}X \to \mathsf{Idl}(\mathcal{P}(X))$ from \Cref{exampleindcompl} (where $\mathcal{P}X$ the powerset of $X$). Then, the corresponding flat geometric morphism, $\mathsf{Sh}(\mathcal{P}X) \to \mathsf{Sh}(\mathsf{Idl}(\mathcal{P}X))$ is nothing but the geometric morphism, \[i_X: \mathsf{Set}^X \to \mathsf{Sh}(\beta(X)).\] Now let $\mathcal{E}$ be a $\beta$-complete topos and consider the diagram below. Of course, recall that coherent topoi are indeed $\beta$-complete as we showed in the previous section.

\[\begin{tikzcd}
	{\mathsf{Set}^X} & {\mathcal{E}} \\
	{\mathsf{Sh}(\beta(X))}
	\arrow["f", from=1-1, to=1-2]
	\arrow["i_X"', from=1-1, to=2-1]
	\arrow["{i_\sharp(f)}"', curve={height=12pt}, dashed, from=2-1, to=1-2]
\end{tikzcd}\]

We claim that the family of maps $i^X_\sharp$, with $X$ a set, essentially account for the construction of ultrapowers. While our approach is original, this observation is not entirely new to the literature, and indeed Marmolejo's PhD thesis \cite{marmolejo1995ultraproducts} is built on this intuition. Before we give any definition of ultracategory or ultrastructure, we shall clarify the intution we just suggested, i.e. we shall relate, at least informally, $i^X_\sharp$ to the construction of ultrapowers. To do so, consider the diagram below (where we avoid all the $X$-dependencies as they play no role).

\[\begin{tikzcd}
	& X && {\mathsf{Set}^X} & {\mathcal{E}} \\
	{(\beta(X),\mathsf{disc})} & {(\beta(X),\mathsf{Stone})} & {\mathsf{Set}^{\beta(X)}} & {\mathsf{Sh}(\beta(X))}
	\arrow["f", from=1-4, to=1-5]
	\arrow["i", from=1-4, to=2-4]
	\arrow["{i_\sharp(f)}"', curve={height=12pt}, dashed, from=2-4, to=1-5]
	\arrow["q"', two heads, from=2-1, to=2-2]
	\arrow["j"', from=1-2, to=2-1]
	\arrow["i", from=1-2, to=2-2]
	\arrow["j"', from=1-4, to=2-3]
	\arrow["q"', from=2-3, to=2-4]
\end{tikzcd}\]

By $(\beta(X),\mathsf{Stone})$ we simply mean the usual topology on the space of ultrafilters, which corresponds to the ideal completion of the powerset of $X$. By inspecting this diagram, we can look at what happens at the level of hom-categories.

\[\begin{tikzcd}
	{\mathsf{pt}(\mathcal{E})^X} & {\mathsf{Topoi}(\mathsf{Sh}(\beta(X)), \mathcal{E})} & {\mathsf{Topoi}(\mathsf{Set}^{\beta(X)}, \mathcal{E})} \\
	{\mathsf{Topoi}(\mathsf{Set}, \mathcal{E})^X} & {\mathsf{Topoi}(\mathsf{Set}^X, \mathcal{E})} & {\mathsf{pt}(\mathcal{E})^{\beta(X)}}
	\arrow["\simeq"', no head, from=2-1, to=2-2]
	\arrow["\simeq", no head, from=2-1, to=1-1]
	\arrow["{q^\sharp}", from=1-2, to=1-3]
	\arrow["\simeq"', no head, from=1-3, to=2-3]
	\arrow["{i_{\sharp}}", from=2-2, to=1-2]
\end{tikzcd}\]

Altogether, and with a bit of abuse of notation that ignores the equivalence of categories, we obtain a functor

\begin{equation} \label{ultrastructure}
    q^\sharp_X i_\sharp^X: \mathsf{pt}(\mathcal{E})^X \to \mathsf{pt}(\mathcal{E})^{\beta(X)}.
\end{equation}

If we now transpose this functor, we obtain the pairing below, which we shall denote suggestively by an integral notation,

\begin{equation} \label{ultrastructureoriginal}
    \int_X(-)d(-): \mathsf{pt}(\mathcal{E})^X \times \beta(X) \to \mathsf{pt}(\mathcal{E}).
\end{equation}

Now, to see that this construction matches the intuition we have provided, we choose a point of the topos $\mathsf{Sh}(\beta(X))$, which is the same as an ultrafilter $U \in \beta(X)$. We shall denote such a point $U: \mathsf{Set} \to \mathsf{Sh}(\beta(X))$. Then, it was observed by Marmolejo (and possibly by others before) that $(i_\sharp f) \circ U$ coincides with the ultraproduct $\frac{\Pi f}{U}$ of $f$ along $U$ (see for example the introduction of \cite{marmolejo2005locale} or the more extensive the discussion in \cite[Chap 2]{marmolejo1995ultraproducts}). Thus we obtain the chain of isomorphisms we wanted to show, \[\int f dU = (i_\sharp f) \circ U \cong  \frac{\Pi f}{U}.\]

The isomorphism above also clarifies the sense in which the ultrastructure is essentially accounted by $i^X_\sharp$, while $q^\sharp_X$ is evaluating the parametric ultraproduct along a specific ultrafilter.
\end{constr}

\begin{disc}
The general program of ultrastructures and ultracategories is to encapsulate the pairing above in an axiomatic way. 
Thus an ultrastructure on a category $\mathcal{A}$ should be a(n indexed family of) pairing(s),

\[\int_X(-)d(-): \mathcal{A}^X \times \beta(X) \to \mathcal{A}\]

that has all the perks and features of that introduced in \Cref{ultrastructureoriginal}. Taking inspiration from \Cref{emergentultrastructures} and particularly from \Cref{ultrastructure}, an ultrastructure on $\mathcal{A}$ may be a family of (pseudo)sections $\Sigma_X$ of the functors $i^\sharp_X: \mathcal{A}^{\beta{X}} \to \mathcal{A}^X$ for each set $X$, \[\Sigma_X: \mathcal{A}^X \to \mathcal{A}^{\beta{X}}.\]
\end{disc}

\begin{constr}[A $2$-dimensional aspect of ultrastructures] \label{2dimensional}
The problem of this approach is that, as we discussed above, the universal property of $\mathcal{E}$ with respect to the construction of ultraproducts is accounted for by the functor $i^X_\sharp$, and the composition in \Cref{ultrastructure} loses information by composing with $q^\sharp_X$. Let us be more precise. Recall that $i_\sharp(x)$ coincides with a right Kan extension, \[i^X_\sharp(x) \cong \mathsf{ran}_{i_X}x,\] and thus there is a $2$-dimensional aspect to its universal property in $\mathsf{Topoi}$. Our definition of ultrastructure must witness at least a trace of such universal property. Consider the example below,

\[\begin{tikzcd}
	&& {\mathsf{Set}^Y} && {\mathcal{E}} \\
	\\
	{\mathsf{Set}^X} && {\mathsf{Sh}(\beta(Y))} \\
	\\
	{\mathsf{Sh}(\beta(X))}
	\arrow["f", from=1-3, to=1-5]
	\arrow["{i^Y}"', from=1-3, to=3-3]
	\arrow["{i^X}"', from=3-1, to=5-1]
	\arrow["g", from=3-1, to=3-3]
	\arrow["{i^Y_\sharp(f)}"', from=3-3, to=1-5]
	\arrow["{i^X_\sharp(g)}"', from=5-1, to=3-3]
	\arrow[""{name=0, anchor=center, inner sep=0}, "{i^X_\sharp(i^Y_\sharp(f)g)}"', shift right=2, curve={height=45pt}, from=5-1, to=1-5]
	\arrow["{\Delta_{fg}^{XY}}"{description}, shorten >=3pt, Rightarrow, from=3-3, to=0]
\end{tikzcd}\]

where $i_\sharp(g)$ exists because $\mathsf{Sh}(\beta(Y))$ is a coherent topos itself. Then, by the universal property of right Kan extensions, we obtain the natural transformation in the diagram,

\[\Delta^{XY}_{fg} : \mathsf{ran}_{i^Y}f  \circ \mathsf{ran}_{i^X}g  \Rightarrow \mathsf{ran}_{i^X}(\mathsf{ran}_{i^Y}f \circ g).\]

Collecting the data of these natural transformations we obtain the diagram below.

\[\begin{tikzcd}
	{\beta Y^X \times \mathsf{pt}(\mathcal{E})^Y} & {\beta Y^{X} \times \mathsf{pt}(\mathcal{E})^{\beta Y}} & {\mathsf{pt}(\mathcal{E})^X} \\
	\\
	{\beta Y^{\beta(X)} \times \mathsf{pt}(\mathcal{E})^{\beta Y}} && {\mathsf{pt}(\mathcal{E})^{\beta X}}
	\arrow["{i_\sharp^X \times i_\sharp^Y}"{description}, from=1-1, to=3-1]
	\arrow["{i_\sharp^X}"{description}, from=1-3, to=3-3]
	\arrow["{1 \times i_\sharp^Y}", from=1-1, to=1-2]
	\arrow["{\circ_{\beta Y}}", from=1-2, to=1-3]
	\arrow["{\circ_{\beta Y}}"', from=3-1, to=3-3]
	\arrow["{\Delta^{XY}}"{description}, Rightarrow, from=3-1, to=1-3]
\end{tikzcd}\]


At this stage we have no guarantee that the data of $\Sigma_X$ and $\Delta^{XY}$ is enough to recover the whole structure $\mathsf{pt}(\mathcal{E})$ is canonically equipped with, but we do know that this structure has at least two important features:
\begin{itemize}
    \item $\Sigma_X$ imitates $q^\sharp_X i_\sharp^X$ from \Cref{ultrastructure} for the construction of ultrapowers of models, and this is even more transparent when we transpose it as in \Cref{ultrastructureoriginal}.
    \item $\Delta^{XY}$ is a trace of the universal properties of $i^X_\sharp$, which at least recovers the interaction between the indexing sets of the ultrastructure.
\end{itemize}

In the next subsection we will use the discussion above as a motivation for our definition of ultrastructure $(\Sigma_X, \Delta^{XY})$ on an accessible category $\mathcal{A}$ with directed colimits. Before moving to that, we shall make a couple of comments to compare our discussion with the literature.
\end{constr}

\begin{disc}[Marmolejo's Łoś categories]
There are several ways to circumvent the probelm that the functor $q^\sharp_X i_\sharp^X$ in \Cref{emergentultrastructures} loses some information about the ultrastructure, and all these approaches are indeed related. Another way to keep track of the fact that a coherent topos is $\flat$-complete is due to Marmolejo \cite{marmolejo1995ultraproducts} and in its essence was (possibly independently) rediscovered by Lurie more recently \cite{lurieultracategories}. Let $\mathcal{E}$ be a topos and define, 
\begin{align*}
    \mathcal{E}^{\bullet} : & \mathsf{Sp}^\circ \to \mathsf{Cat} \\
    & X \mapsto \mathsf{Topoi}(\mathsf{Sh}(X),\mathcal{E})
\end{align*}
This derivator-like approach gives us a representation of topoi as $\mathsf{Sp}$-indexed categories and provides a $2$-functor, \[\mathsf{Topoi} \to \mathsf{Prestk}(\mathsf{Sp}).\]

It is clear that if $\mathcal{E}$ is a coherent topos, then for all ultrafinite maps $f$ (\Cref{generalities2}), $\mathcal{E}^\bullet(f)$ has a right adjoint (namely $f_\sharp$), and this could be chosen as a way to record the ultrastructure over $\mathcal{E}$. Marmolejo makes this choice in his PhD thesis \cite[Def 3.13]{marmolejo1995ultraproducts}, and defines a \textit{Łoś category}  to be precisely a $\mathsf{Sp}$-indexed category with the property above. We'll come back to Marmolejo's approach later and discuss the advantages and disadvantages of such choices. For the moment, just notice that a Łoś category is -- at least in principle -- a much less economical object then a triple $(\mathcal{A}, \Sigma_X, \Delta_{XY})$, for several reasons. For example, we have to specify a category for each topological space, whereas the triple $(\mathcal{A}, \Sigma_X, \Delta_{XY})$ specifies only one category.
\end{disc}

\begin{disc}[Lurie and Makkai]
Both Makkai and Lurie have an approach which is very much in the spirit of \Cref{emergentultrastructures} and \Cref{2dimensional} and to be more precise, Lurie's notion is essentially identical to the one we will propose later.

They both used this structure to show that every coherent topos can be recovered by its category of points plus the additional data of its ultrastructure. Notice that if we look at this result from the point of view of this paper, their result is very surprising for two reasons.

\begin{itemize}
    \item The maps of the form $\mathsf{Set}^X \to \mathsf{Sh}(\beta X)$ are, at least in principle, very few among flat geometric embeddings. Conceptual completeness seem to suggest that the $2$-functor \[\mathbb{U}_\beta^\flat: \mathsf{RInj}(\mathsf{Emb}_\flat) \to \mathsf{RInj}(\mathsf{Emb}_\beta)\] is a biequivalence of categories. Unfortunately we did not manage to show such a theorem.
    \item Differently from the localic case, there is no evidence that being $\beta$- or $\flat$-complete characterizes (co)retractions of coherent topoi.
\end{itemize}
\end{disc}

\section{Modeling Ultrastructures in Formal Model Theories} \label{sec3}

In this section we provide candidate definitions of ultracategory and discuss their relationships. As we mentioned several times since the opening of the paper, for us, an ultrastructure should imitate the construction of ultraproducts \[\int_X (-)dU: \mathsf{Mod}(\mathbb{T})^X \to \mathsf{Mod}(\mathbb{T})\] defined on the category of models of a coherent theory. This means in particular that an ultrastructure should be defined on something that looks like, behaves like and bites like a category of models of a geometric theory. As we have discussed in \cite{thlo}, there are several solutions available in the literature to  treat \textit{formally} categories of models of geometric theories. Among the options we can choose:

\begin{itemize}

    \item[$(\mathsf{Acc}_\omega)$] accessible categories with directed colimits, those offer the most classically developed approach to the topic, dating back to \cite{Makkaipare} and \cite{adamek_rosicky_1994}.
    \item[$(\mathsf{AEC})$] abstract elementary classes were popularized by Shelah \cite{shelah2009classification} in the model theory community as a general framework to have a syntax-independent treatment of categories of models of infinitary first order theories. This framework is a special case of that of accessible categories \cite{beke2012abstract}.
    \item[$(\mathsf{BIon})$] (bounded) ionads, this is a much newer framework, introduced by Garner \cite{ionads} with a geometric point of view, and studied from a more logical point of view in \cite{thlo}.
\end{itemize}

Older and/or less developed approaches can be seen through the lenses of formal model theory:

\begin{itemize}
    \item[($\mathsf{Cat}_{\mathsf{Sp}}$)] $\mathsf{Sp}$-indexed categories were used by  both Marmolejo and Lurie as an intermediate step to show properties of their ultracategories. This $2$-category deserves a separate study. At a phenomenological level we already have evidences that it is related to $\mathsf{Acc}_\omega$, see for example \cite[Prop. 6.14]{marmolejo1995ultraproducts}.
    \item[({$\mathsf{Grp}[\mathsf{Sp}]$})]  Topological groupoids and topological categories are classically understood to account for the Galois theory of a topos. Yet because a (large) topological groupoid can be seen as the groupoid of models of a topos equipped with the logical topology \cite{awodey2013first}, we see that those are perfect placeholders to performe formal model theory.
\end{itemize}

 In the discussion that follows we will instantiate definitions of ultracategory in some of those arenas of formal model theory: $\mathsf{Acc}_\omega, \mathsf{BIon}$ and $\mathsf{Acc}_\omega^{\mathsf{Sp}}$, which is a well behaved variant of $\mathsf{Cat}_{\mathsf{Sp}}$ in the list above.

\subsection{Ultrastructures in $\mathsf{Acc}_\omega$} \label{sec:ultrastructures}

\begin{defn}[Ultrastructures, ultracategories and ultrafunctors in $\mathsf{Acc}_\omega$] \label{def:ultracategory}
Let $\mathcal{A}$ be an accessible category with directed colimits. An \textit{ultrastructure} on $\mathcal{A}$ is specified by the following data:

\begin{itemize}
    \item[$(\Sigma, \epsilon)$] \textit{functorial ultraproducts}: a family (pseudo)sections $\Sigma_X: \mathcal{A}^X \to \mathcal{A}^{\beta{X}}$ of the functor $i^\sharp_X: \mathcal{A}^{\beta{X}} \to \mathcal{A}^X$ for each set $X$, accompanied by a natural isomorphism $\epsilon_X : i^\sharp_X\Sigma_X \stackrel{\sim}{\to} 1$. We shall use the notation below for the transpose of $\Sigma_X$, \[\int_X(-)d(-): \mathcal{A}^X \times \beta(X) \to \mathcal{A}.\]
    \item[$(\Delta)$] \textit{algebraic universality}: a family of natural transformations $\Delta^{XY}$ filling the diagrams below. We will write $\Delta^{XY} : \Sigma_Y b \circ \Sigma_X a \Rightarrow \Sigma_X(\Sigma_Y b \circ a). $
\[\begin{tikzcd}
	{\beta Y^X \times \mathcal{A}^Y} & {\beta Y^{X} \times \mathcal{A}^{\beta Y}} & {\mathcal{A}^X} \\
	\\
	{\beta Y^{\beta(X)} \times \mathcal{A}^{\beta Y}} && {\mathcal{A}^{\beta X}}
	\arrow["{\Sigma_X \times \Sigma_Y}"{description}, from=1-1, to=3-1]
	\arrow["{\Sigma_X}"{description}, from=1-3, to=3-3]
	\arrow["{1 \times \Sigma_Y}", from=1-1, to=1-2]
	\arrow["{\circ_{\beta Y}}", from=1-2, to=1-3]
	\arrow["{\circ_{\beta Y}}"', from=3-1, to=3-3]
	\arrow["{\Delta^{XY}}"{description}, Rightarrow, from=3-1, to=1-3]
\end{tikzcd}\]
\end{itemize}

The data above is subject to some compatibility axioms listed below.

\begin{enumerate}
    \item the pastings in the diagram below both yield a isomorphism, where $\mathsf{Mono}(X,Y)$ the set of injections from $X$ to $Y$.
\[\begin{tikzcd}
	{\mathsf{Mono}(X,Y)\times\mathcal{A}^Y} & {\beta Y^X \times \mathcal{A}^Y} & {\beta Y^{X} \times \mathcal{A}^{\beta Y}} \\
	&& {\mathcal{A}^X} \\
	& {\beta Y^{\beta(X)} \times \mathcal{A}^{\beta Y}} & {\mathcal{A}^{\beta X}} & {\mathcal{A}^X}
	\arrow["{\Sigma_X \times \Sigma_Y}"{description}, from=1-2, to=3-2]
	\arrow["{\Sigma_X}"{description}, from=2-3, to=3-3]
	\arrow["{1 \times \Sigma_Y}", from=1-2, to=1-3]
	\arrow["{\circ_{\beta Y}}", from=1-3, to=2-3]
	\arrow["{\circ_{\beta Y}}"', from=3-2, to=3-3]
	\arrow["{i^\sharp}", from=3-3, to=3-4]
	\arrow["{\Delta^{XY}}"{description}, Rightarrow, from=3-2, to=1-3]
	\arrow["{i_Y \times 1}", from=1-1, to=1-2]
\end{tikzcd}\]

    \item \textit{skew associativity}: by applying algebraic universality, the two possible ways to reduce the composition on the left to the one on the right produce the same map.
    
    \adjustbox{scale=0.8,center}{%
    \begin{tikzcd}
	& {\Sigma_Yc \circ  \Sigma_Z(\Sigma_X b \circ a)} & { \Sigma_Z(\Sigma_Yc \circ \Sigma_X b \circ a)} \\
	{\Sigma_Yc \circ \Sigma_X b \circ \Sigma_Za} &&& { \Sigma_Z(\Sigma_X(\Sigma_Yc \circ  b) \circ a)} \\
	&& { \Sigma_X(\Sigma_Yc \circ b) \circ \Sigma_Za}
	\arrow[curve={height=-12pt}, Rightarrow, from=2-1, to=1-2]
	\arrow[Rightarrow, from=1-2, to=1-3]
	\arrow[curve={height=-12pt}, Rightarrow, from=1-3, to=2-4]
	\arrow[curve={height=12pt}, Rightarrow, from=2-1, to=3-3]
	\arrow[curve={height=6pt}, Rightarrow, from=3-3, to=2-4]
    \end{tikzcd}}

\end{enumerate}

 An \textit{ultracategory} in $\mathsf{Acc}_\omega$ is an object $\mathcal{A}$ equipped with an ultrastructure $(\Sigma^\mathcal{A}_\bullet, \Delta^{\bullet,\bullet})$. We will often blur the distinction between an ultracategory $(\mathcal{A}, \Sigma^\mathcal{A}_\bullet, \Delta^{\bullet,\bullet})$ with its underlying category $\mathcal{A}$ to simplify the notation.  An \textit{ultrafunctor} $f: \mathcal{A} \to \mathcal{B}$ is a functor $f: \mathcal{A} \to \mathcal{B}$ preserving directed colimits and such that the following diagram pseudocommutes for all sets $X$.

\[\begin{tikzcd}
	{\mathcal{A}^X} & {\mathcal{B}^X} \\
	{\mathcal{A}^{\beta{X}}} & {\mathcal{B}^{\beta{X}}}
	\arrow["{\Sigma^{\mathcal{A}}_X}"', from=1-1, to=2-1]
	\arrow["{\Sigma^{\mathcal{B}}_X}", from=1-2, to=2-2]
	\arrow["{f^X}", from=1-1, to=1-2]
	\arrow["{f^{\beta(X)}}"', from=2-1, to=2-2]
\end{tikzcd}\]
\end{defn}

\begin{defn}[The $2$-category $\mathsf{Ult}$ of Ultracategories] \label{def:ult}
Ultracategories, ultrafunctors and natural transformations form a $2$-category $\mathsf{Ult}$ equipped with a locally fully faithful forgetful $2$-functor, \[\mathbb{U}: \mathsf{Ult} \to \mathsf{Acc}_\omega.\]
\end{defn}

\begin{prop} \label{th:ptliftult}
Let $\mathcal{E}$ be a topos in $\mathsf{RInj}(\mathsf{Emb}_\beta)$. Then its category of points $\mathsf{pt}(\mathcal{E})$ admits an ultrastructure and every right injective morphism between those topoi induces an ultrafunctor. Thus, we obtain a lift of $2$-functor $\mathsf{pt}$ of points as below so that the diagram commutes,

\[\begin{tikzcd}
	{\mathsf{CohTopoi}_\mathcal{S}} & {\mathsf{RInj(\mathsf{Emb}_\beta)}} & {\mathsf{Ult}} \\
	& {\mathsf{Topoi}} & {\mathsf{Acc}_\omega}
	\arrow[from=1-1, to=2-2]
	\arrow[from=1-3, to=2-3]
	\arrow["{\mathsf{pt}}"', from=2-2, to=2-3]
	\arrow[dashed, from=1-2, to=1-3]
	\arrow[from=1-1, to=1-2]
	\arrow[from=1-2, to=2-2]
\end{tikzcd}\]
\end{prop}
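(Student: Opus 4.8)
The plan is to exhibit the data $(\Sigma,\epsilon,\Delta)$ of \Cref{def:ultracategory} on $\mathcal A:=\mathsf{pt}(\mathcal E)$ by transporting the constructions of \Cref{emergentultrastructures} and \Cref{2dimensional}, and then to check the two coherence axioms by repeated appeal to the universal property of right Kan extensions in $\mathsf{Topoi}$. Throughout I use the decomposition $\mathsf{Set}^X\cong\coprod_{x\in X}\mathsf{Set}$, which gives $\mathsf{Topoi}(\mathsf{Set}^X,\mathcal E)\simeq\mathsf{pt}(\mathcal E)^X$ (and likewise with $\mathsf{Sh}(\beta Y)$ in place of $\mathcal E$); that $\mathsf{pt}(\mathcal E)$ lies in $\mathsf{Acc}_\omega$ is recorded by the functor $\mathsf{pt}$ at the foot of the square, so I take it as given. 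I first set $\Sigma_X:=q^\sharp_X\circ i^X_\sharp$, exactly as in \Cref{emergentultrastructures}. The section datum $\epsilon_X$ is then immediate: writing the restriction $\mathcal A^{\beta X}\to\mathcal A^X$ of \Cref{def:ultracategory} as precomposition $j^\sharp$ with the principal-ultrafilter inclusion $j\colon\mathsf{Set}^X\to\mathsf{Set}^{\beta X}$, and using $i_X=q_X\circ j$, one computes $i^\sharp_X\Sigma_X=j^\sharp q^\sharp_X i^X_\sharp=(q_Xj)^\sharp i^X_\sharp=i_X^\sharp i^X_\sharp$. This composite is canonically isomorphic to the identity: it is exactly the statement that the right Kan extension $i^X_\sharp(x)=\mathsf{ran}_{i_X}x$ restricts back along $i_X$ to $x$, i.e. the defining isomorphism of right Kan injectivity of $\mathcal E$ with respect to $i_X\in\mathsf{Emb}_\beta$. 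That isomorphism is $\epsilon_X$.

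Next I would produce $\Delta^{XY}$ from \Cref{2dimensional}. For $a\in\beta Y^X=\mathsf{Topoi}(\mathsf{Set}^X,\mathsf{Sh}(\beta Y))$ and $b\in\mathsf{pt}(\mathcal E)^Y$, the comparison
\[\Delta^{XY}_{ab}\colon i^Y_\sharp(b)\circ i^X_\sharp(a)\Rightarrow i^X_\sharp\big(i^Y_\sharp(b)\circ a\big)\]
is the unique $2$-cell obtained by whiskering the counit $i^X_\sharp(a)\circ i_X\Rightarrow a$ of the right Kan extension $\mathsf{ran}_{i_X}=i^X_\sharp$ by $i^Y_\sharp(b)$ and transposing across the universal property of $\mathsf{ran}_{i_X}$; here $i^X_\sharp(a)$ is defined because $\mathsf{Sh}(\beta Y)$ is coherent, hence $\beta$-complete by \Cref{mainth}. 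These cells are natural in $a$ and $b$, and postcomposing with the various $q^\sharp$ assembles them into the $2$-cell $\Delta^{XY}$ filling the lax square of \Cref{def:ultracategory}, where the functor $\Sigma_X$ on the factor $\beta Y^X$ is the functorial ultraproduct of $\mathsf{Sh}(\beta Y)$ and $\circ_{\beta Y}$ is composition of geometric morphisms.

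The substantive part, and the step I expect to be the main obstacle, is the verification of axioms (1) and (2); both are handled by the uniqueness clause of the universal property of (iterated) right Kan extensions. To test equality of two $2$-cells whose common codomain is a Kan extension $i^X_\sharp(-)$, it suffices to compare their whiskerings with $i_X$, where the defining counits trivialise the comparison cells. For axiom (1), when $a$ comes from an injection under $\mathsf{Mono}(X,Y)\hookrightarrow\beta Y^X$ — an $X$-family of principal ultrafilters — the counit feeding $\Delta^{XY}_{ab}$ is invertible by the same right-injectivity input that produced $\epsilon$, so $\Delta^{XY}$ becomes invertible there, and after restricting along $i^\sharp$ both composites in the diagram are carried to this common isomorphism. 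For axiom (2), skew associativity is precisely the pentagon for the comparison cells of a triple right Kan extension along $i_X,i_Y,i_Z$: the two routes from $\Sigma_Yc\circ\Sigma_Xb\circ\Sigma_Za$ to $\Sigma_Z\big(\Sigma_X(\Sigma_Yc\circ b)\circ a\big)$ whisker to the same $2$-cell over $\mathsf{Set}^Z$ and so coincide. This is where the genuine bookkeeping lies, since one must keep the three embeddings and their counits correctly ordered, though I expect no conceptual difficulty beyond careful pasting.

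Finally, for $1$-cells and the lift: a morphism of $\mathsf{RInj}(\mathsf{Emb}_\beta)$ is a geometric morphism $\phi\colon\mathcal E\to\mathcal D$ right injective with respect to $\mathsf{Emb}_\beta$, i.e. $\phi\circ i^X_\sharp(x)\cong i^X_\sharp(\phi\circ x)$. Postcomposition defines $\mathsf{pt}(\phi)\colon\mathsf{pt}(\mathcal E)\to\mathsf{pt}(\mathcal D)$, which preserves directed colimits since it is a $1$-cell of $\mathsf{Acc}_\omega$; whiskering the displayed isomorphism by $q^\sharp$ yields exactly the pseudocommutativity of the ultrafunctor square of \Cref{def:ultracategory}, so $\mathsf{pt}(\phi)$ is an ultrafunctor. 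As $\mathsf{pt}$ is already a $2$-functor, $\mathbb U\colon\mathsf{Ult}\to\mathsf{Acc}_\omega$ is locally fully faithful, and all of $\Sigma,\epsilon,\Delta$ are natural in the topos, the assignment lifts to a $2$-functor $\mathsf{RInj}(\mathsf{Emb}_\beta)\to\mathsf{Ult}$ sitting over $\mathsf{pt}$; the leftmost triangle commutes because spartan morphisms between coherent topoi are right injective by \Cref{moderatesarekan}.
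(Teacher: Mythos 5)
Your proposal is correct and follows essentially the same route as the paper's proof: you take $\Sigma_X = q^\sharp_X i^X_\sharp$ and $\Delta^{XY}$ directly from \Cref{emergentultrastructures} and \Cref{2dimensional}, reduce skew associativity to the coherence (pasting) structure of iterated right Kan extensions, handle axiom (1) by the analogous principal-ultrafilter argument, and observe that right injective morphisms give ultrafunctors by whiskering the defining isomorphism. In fact your write-up is more explicit than the paper's own proof (notably the verification of $\epsilon_X$ via $i_X = q_X\circ j$ and the uniqueness-by-whiskering technique), which the paper leaves as ``follows directly'' and ``completely evident.''
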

\begin{proof}
Most of the proof follows directly from the discussion in \Cref{emergentultrastructures} and \Cref{2dimensional}. We should show that for $\mathcal{E}$ be $\beta$-complete topos, the ultrastructure we discussed verify the axioms (1) and (2) above. We shall only discuss (2), because (1) follows from a very similar reasoning and is much less interesting from a conceptual point of view. Going back to \Cref{2dimensional}, checking the skew associativity leads us to compare the two natural transformations below,

\adjustbox{scale=0.8,center}{%
\begin{tikzcd}
	& {\mathsf{ran}_{i^Y_\sharp}c  \circ \mathsf{ran}_{i^Z_\sharp}(\mathsf{ran}_{i^X_\sharp} b \circ a)} & { \mathsf{ran}_{i^Z_\sharp}(\mathsf{ran}_{i^Y_\sharp}c  \circ\mathsf{ran}_{i^X_\sharp} b \circ a)} \\
	{\mathsf{ran}_{i^Y_\sharp}c \circ \mathsf{ran}_{i^X_\sharp} b \circ \mathsf{ran}_{i^Z_\sharp}a} &&& { \mathsf{ran}_{i^Z_\sharp}(\mathsf{ran}_{i^X_\sharp}(\mathsf{ran}_{i^Y_\sharp}c  \circ b) \circ a)} \\
	&& { \mathsf{ran}_{i^X_\sharp} (\mathsf{ran}_{i^Y_\sharp}c \circ b) \circ \mathsf{ran}_{i^Z_\sharp}a}
	\arrow[curve={height=-12pt}, Rightarrow, from=2-1, to=1-2]
	\arrow[Rightarrow, from=1-2, to=1-3]
	\arrow[curve={height=-12pt}, Rightarrow, from=1-3, to=2-4]
	\arrow[curve={height=12pt}, Rightarrow, from=2-1, to=3-3]
	\arrow[curve={height=6pt}, Rightarrow, from=3-3, to=2-4]
\end{tikzcd}
}
And indeed the fact that twose two paths coincide correspond to the skew associativity structure of Kan extensions. The fact that right Kan injective morphisms induce ultrafunctors is completely evident.

\end{proof}

\begin{rem}[A comment on Lurie's ultracategories]
We should compare our notion to Lurie's one. Besides the fact that for us the underlying category of an ultracategory is an accessible category with directed colimits by design, our notion of ultracategory is very similar to Lurie's from \cite[Def. 1.3.1]{lurieultracategories}, the notation we chose makes the comparison easier. Notice that our \textit{skew associativity} essentially corresponds to the very mysterious axiom (C) in Lurie, while our axiom (1) plays the role of (A) and (B). We believe our treatment offers a cleaner approach with respect to Lurie's, this can be seen for example comparing our \Cref{emergentultrastructures} and especially \Cref{2dimensional} with his \cite[Exa. 1.3.8, which relies on 1.2.2, 1.2.8]{lurieultracategories} where we are essentially proving the same thing.
\end{rem}

\begin{rem} \label{newexamples}
All the examples of accessible ultracategories in Lurie are of the form $\mathsf{pt}(\mathcal{E})$ for $\mathcal{E}$ a coherent topos. The theorem above provides a new source of ultracategories and ultrastuctures. 
\end{rem}

\begin{rem}[Ultrafunctors and coherent objects] \label{rep1}
As we have seen in the proposition above, the notion of ultrafunctor is tightly connected to that of spartan morphism of coherent topoi. Consider a coherent topos $\mathcal{E}$ and a coherent object $e: \mathcal{E} \to \mathsf{Set}[\mathbb{O}]$ encoded as a geometric morphism into the classifier of the theory of objects. It is easy to see that $e$ is a spartan geometric morphism between right Kan injectives with respect to $\mathsf{Emb}_\beta$, and thus the corresponding category of points gives us an ultrafunctor, $\mathsf{pt}(e) :\mathsf{pt}(\mathcal{E}) \to \mathsf{Set}$ where $\mathsf{Set}$ has \textit{the canonical ultrastructure}. It follows that we have a functor, \[\mathsf{Coh}(\mathcal{E}) \to \mathsf{Ult}(\mathsf{pt}(\mathcal{E}),\mathsf{Set}).\]
Notice though, that the assumption that $e$ is coherent is crucial in order to obtain a spartan geometric morphism, and thus we cannot guarantee any functor $\mathcal{E} \to \mathsf{Ult}(\mathsf{pt}(\mathcal{E}),\mathsf{Set}).$ Ultrafunctors cannot recover the whole topos, at least not on the spot. This behavior is related to the fact that the $2$-category $\mathsf{CohTopoi}_\mathcal{S}$ is mostly related to the $2$-category of pretopoi, as opposed to the $2$-category of coherent topoi and geometric morphisms between them.
\end{rem}



\subsubsection{Weak Ultrafunctors} \label{weak}

In the previous remark we found a representation of the pretopos of coherent objects via $\mathsf{Set}$-ultrafunctors, $\mathsf{Coh}(\mathcal{E}) \to \mathsf{Ult}(\mathsf{pt}(\mathcal{E}),\mathsf{Set})$. We shall discuss how to improve such embedding into a representation of the whole topos $\mathcal{E}$. Of course, we must pay the price of having a more flexible notion of ultrafunctor. The construction above was based on the fact that we can recover coherent objects by spartan geometric morphism,

\[\mathsf{Coh}(\mathcal{E}) \hookrightarrow \mathsf{Topoi}_\mathcal{S}(\mathcal{E},\mathsf{Set}[\mathbb{O}]) \stackrel{\mathsf{pt}}{\to} \mathsf{Ult}(\mathsf{pt}(\mathcal{E}), \mathsf{Set})).\]

In full generality, we recover all objects by studying all geometric morphisms $\mathcal{E} \to \mathsf{Set}[\mathbb{O}]$, but those will not induce ultrafunctors. Yet, we can still trace a footprint of the ultrastructure of a geometric morphism as follows.

\begin{constr} \label{weakultra}
Let $f :\mathcal{E} \to \mathcal{D}$ be any geometric morphism between topoi in $\mathsf{RInj}(\mathsf{Emb}_\beta)$. Following the general theory of Kan injectivity, by the pseudocommutativity of the diagram on the left, we get a Beck--Chevalley-like natural transformation in the diagram on the right. 

\[\begin{tikzcd}
	{(\mathsf{Set}^X, \mathcal{E})} & {(\mathsf{Set}^X, \mathcal{D})} & {(\mathsf{Set}^X, \mathcal{E})} & {(\mathsf{Set}^X, \mathcal{D})} \\
	{(\mathsf{Sh}(\beta(X)), \mathcal{E})} & {(\mathsf{Sh}(\beta(X)), \mathcal{D})} & {(\mathsf{Sh}(\beta(X)), \mathcal{E})} & {(\mathsf{Sh}(\beta(X)), \mathcal{D})}
	\arrow["{i^\sharp_\mathcal{E}}", from=2-1, to=1-1]
	\arrow["{i^\sharp_\mathcal{D}}"', from=2-2, to=1-2]
	\arrow["{f^\bullet}", from=1-1, to=1-2]
	\arrow["{f^\bullet}"', from=2-1, to=2-2]
	\arrow["\cong"{description}, draw=none, from=2-1, to=1-2]
	\arrow["{f^\bullet}"', from=2-3, to=2-4]
	\arrow[from=1-4, to=2-4]
	\arrow["{i_\sharp^\mathcal{E}}"', from=1-3, to=2-3]
	\arrow["{i_\sharp^\mathcal{D}}", from=1-4, to=2-4]
	\arrow["{f^\bullet}", from=1-3, to=1-4]
	\arrow["\rho"{description}, Rightarrow, dashed, from=2-3, to=1-4]
\end{tikzcd}\]

Then, as it is observed in \cite{dlsolo}, $f$ is a morphism in $\mathsf{RInj}(\mathsf{Emb}_\beta)$ if and only if $\rho$ is an isomorphism. In full generality, though, we can go back to \Cref{emergentultrastructures} and look at the diagram below.

\[\begin{tikzcd}
	{(\mathsf{Set}^X, \mathcal{E})} && {(\mathsf{Set}^X, \mathcal{D})} \\
	\\
	{(\mathsf{Sh}(\beta(X)), \mathcal{E})} && {(\mathsf{Sh}(\beta(X)), \mathcal{D})} \\
	\\
	{(\mathsf{Set}^{\beta(X)}, \mathcal{E})} && {(\mathsf{Set}^{\beta(X)}, \mathcal{D})}
	\arrow["{q^\sharp_\mathcal{D}}", from=3-3, to=5-3]
	\arrow["{f^\bullet}", from=1-1, to=1-3]
	\arrow["{f^\bullet}"{description}, from=3-1, to=3-3]
	\arrow["{q^\sharp_\mathcal{E}}", from=3-1, to=5-1]
	\arrow["{f^\bullet}"', from=5-1, to=5-3]
	\arrow["{i_\sharp^\mathcal{E}}"', from=1-1, to=3-1]
	\arrow["{i_\sharp^\mathcal{E}}", from=1-3, to=3-3]
	\arrow["\rho"{description}, Rightarrow, from=3-1, to=1-3]
	\arrow["{\Sigma^\mathcal{E}_X}"', curve={height=50pt}, from=1-1, to=5-1]
	\arrow["{\Sigma_X^\mathcal{D}}", curve={height=-50pt}, from=1-3, to=5-3]
\end{tikzcd}\]

By the pasting $q^\sharp_\mathcal{D}(\rho)$, we obtain a natural transformation that compares the two constructions of ultraproducts,

\[q^\sharp_\mathcal{D}(\rho): f^\bullet \Sigma^\mathcal{E}_X \Rightarrow  \Sigma^\mathcal{D}_X f^\bullet.\]

\end{constr}

\begin{rem}[Weak and Left ultrafunctors]\label{def:weak} \label{representation}
One can take the couple $(f, \delta_\bullet) : \mathcal{A} \to \mathcal{B}$, consisting of a functor $f$ together with a natural transformation $\delta_X :f^\bullet \Sigma^\mathcal{E}_X \Rightarrow  \Sigma^\mathcal{D}_X f^\bullet$ and subject to some compatibility axioms as a definition of \textit{weak} ultrafunctor between ultracategories. This would essentially amount to Lurie's notion of left ultrafunctor. This notion  allows for a representation of the whole topos $\mathcal{E}$, via a straightforward variation of the argument in \Cref{rep1}, \[\mathcal{E} \simeq \mathsf{Topoi}(\mathcal{E}, \mathsf{Set}[\mathbb{O}]) \to \mathcal{W}\mathsf{Ult}(\mathsf{pt}(\mathcal{E}), \mathsf{Set}).\] 

In this paper we shall not insist on weak ultrafunctors.
\end{rem}



\subsection{Ultraprofiles in $\mathsf{Acc}^\mathsf{Sp}_\omega$}

In this subsection we explore a new arena of formal model theory, namely \textit{accessible profiles}. The notion is inspired by Marmolejo's treatment of Łoś categories, and by Lurie's stack-y interpretation of ultrastructures \cite[Sec. 4]{lurieultracategories}. Besides this motivational overlap though, the whole subsection offers original material. We shall offer a biadjunction between the $2$-category of accessible profiles and the $2$-category of topoi which enhances the Scott adjunction \cite{thcat} and define a notion of \textit{ultraobject} for this formal model theory.
\subsubsection{Accessible profiles}
\begin{defn}[Accessible profile]
An \textit{accessible profile} is pseudofunctor $\mathcal{A}^\bullet : \mathsf{Sp}^\circ \to \mathsf{Acc}_\omega$. We shall call $\mathsf{Acc}^\mathsf{Sp}_\omega$ the $2$-category of accessible profiles, pseudonatural transformations and modifications between those. We may omit the adjective \textit{accessible} from now on. A profile is \textit{modest} if it small when thought as prestack of spaces, i.e. if it is a small weighted bicolimit of representables. We denonte $\underline{\mathsf{Acc}^\mathsf{Sp}_\omega}$ the full sub $2$-category of modest profiles.
\end{defn}

\begin{rem}[Representable profiles]
We have a Yoneda embedding,\[ \yo : \mathsf{Sp} \to \mathsf{Acc}^\mathsf{Sp}_\omega\] defined by $\yo(X)^Y = \mathsf{Sp}(Y,X)$. Indeed, when we equip $\mathsf{Sp}$ with the $2$-dimensional structure given by the specialization order, $\mathsf{Sp}(Y,X)$ always has directed colimits, and it is accessible because it is a small poset (with directed colimits). 
\end{rem}

\begin{constr}[The profile of models of a Grothendieck topos] \label{profileofmodels}
Let $\mathcal{E}$ be a Grothendieck topos, and define \[\mathbb{pt}(\mathcal{E})^\bullet := \mathsf{Topoi}(\mathsf{Sh}(\bullet), \mathcal{E}).\] $\mathbb{pt}(\mathcal{E})^\bullet$ clearly is a pseudofunctor $\mathsf{Sp}^\circ \to \mathsf{Acc}_\omega$, because indeed each $\mathsf{Topoi}(\mathsf{Sh}(X), \mathcal{E})$ is accessible and has directed colimits. This gives a $2$-functor, \[\mathbb{pt}^\bullet: \mathsf{Topoi} \to \mathsf{Acc}^\mathsf{Sp}_\omega.\]
\end{constr}

\begin{exa}[The accessible profile $\mathsf{Set}^\bullet$]
A very interesting topos to apply this construction to is the classifier of the theory of objects $\mathsf{Set}[\mathbb{O}]$, in this case we can explicitly compute $\mathbb{pt}^\bullet(\mathsf{Set}[\mathbb{O}])$ as follows, \[\mathbb{pt}^\bullet(\mathsf{Set}[\mathbb{O}]) = \mathsf{Topoi}(\mathsf{Sh}(\bullet), \mathsf{Set}[\mathbb{O}]) \simeq \mathsf{Sh}(\bullet).\]
In the rest of the text we will call this profile $\mathsf{Set}^\bullet$, to push the derivator-like intuition that any profile $\mathcal{A}^\bullet$ tells the story of its evaluation at $1$.
\end{exa}

\begin{rem}
There are several ways to look at functor $\mathbb{pt}^\bullet: \mathsf{Topoi} \to \mathsf{Acc}^\mathsf{Sp}_\omega$, we shall point out for the moment that it corresponds to the nerve of the $2$-functor $\mathsf{Sh}: \mathsf{Sp} \to \mathsf{Topoi}$ as depicted in the picture below.

\[\begin{tikzcd}
	& {\mathsf{Sp}} \\
	{\mathsf{Topoi}} && {\mathsf{Acc}_\omega^{\mathsf{Sp}}}
	\arrow["\yo", from=1-2, to=2-3]
	\arrow["{\mathsf{Sh}}"', from=1-2, to=2-1]
	\arrow["{\mathbb{pt}^\bullet}"', from=2-1, to=2-3]
\end{tikzcd}\]

Being a nerve functor, we feel optimistic that it could have a left adjoint, especially given that $\mathsf{Acc}_\omega^{\mathsf{Sp}}$ is almost a category of prestacks. Of course, as it often happens in category theory, \textit{the devil is in size issues}.
\end{rem}

\begin{thm} \label{newadjunction}
The $2$-functor $\mathbb{pt}$ has a (relative) left biadjoint $\Theta$, defined for all modest accessible profiles, i.e. for all modest profiles $\mathcal{A}^\bullet$ we have that $\mathsf{Topoi}(\Theta(\mathcal{A}^\bullet),\mathcal{E}) \simeq \mathsf{Acc}_\omega^{\mathsf{Sp}}(\mathcal{A}^\bullet, \mathbb{pt}^\bullet(\mathcal{E}))$.  

\[\begin{tikzcd}
	{\underline{\mathsf{Acc}^{\mathsf{Sp}}_\omega}} & {\mathsf{Topoi}} \\
	{\mathsf{Acc}^{\mathsf{Sp}}_\omega}
	\arrow[""{name=0, anchor=center, inner sep=0}, "\Theta", dashed, from=1-1, to=1-2]
	\arrow[hook', from=1-1, to=2-1]
	\arrow[""{name=1, anchor=center, inner sep=0}, "{\mathbb{pt}^\bullet}", from=1-2, to=2-1]
	\arrow["\dashv"{anchor=center, rotate=-89}, draw=none, from=0, to=1]
\end{tikzcd}\]

\end{thm}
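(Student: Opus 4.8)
The plan is to recognise $\mathbb{pt}^\bullet$ as the \emph{nerve} of the $2$-functor $\mathsf{Sh}: \mathsf{Sp} \to \mathsf{Topoi}$, exactly as displayed in the triangle of the preceding remark, and to produce $\Theta$ as the associated \emph{realization}, i.e.\ the left bi-Kan extension of $\mathsf{Sh}$ along the Yoneda embedding $\yo: \mathsf{Sp} \to \mathsf{Acc}^{\mathsf{Sp}}_\omega$. Concretely, for a modest profile $\mathcal{A}^\bullet$ I would use its defining presentation as a small weighted bicolimit of representables and set $\Theta(\mathcal{A}^\bullet)$ to be the corresponding weighted bicolimit of $\mathsf{Sh}$ in $\mathsf{Topoi}$; equivalently, using the canonical density presentation, $\Theta(\mathcal{A}^\bullet) \simeq \operatorname*{bicolim}_{\yo X \to \mathcal{A}^\bullet} \mathsf{Sh}(X)$, the colimit being indexed by the (bicategory of) elements of $\mathcal{A}^\bullet$, which is small precisely because $\mathcal{A}^\bullet$ is modest.

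First I would check that this bicolimit lives in $\mathsf{Topoi}$: the relevant small weighted bicolimits of the objects $\mathsf{Sh}(X)$ exist there, and modesty of $\mathcal{A}^\bullet$ guarantees the indexing data is small, which is exactly the reason the biadjoint is only \emph{relative}, defined on $\underline{\mathsf{Acc}^{\mathsf{Sp}}_\omega}$ rather than on all profiles. On representables the value of $\Theta$ is forced by the bicategorical Yoneda lemma,
\[ \mathsf{Acc}^{\mathsf{Sp}}_\omega(\yo X, \mathbb{pt}^\bullet(\mathcal{E})) \simeq \mathbb{pt}^\bullet(\mathcal{E})^X = \mathsf{Topoi}(\mathsf{Sh}(X), \mathcal{E}), \]
so $\Theta(\yo X) \simeq \mathsf{Sh}(X)$ already satisfies the desired universal property and the assignment agrees with $\mathsf{Sh}$ along $\yo$.

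To obtain the full equivalence $\mathsf{Topoi}(\Theta(\mathcal{A}^\bullet), \mathcal{E}) \simeq \mathsf{Acc}^{\mathsf{Sp}}_\omega(\mathcal{A}^\bullet, \mathbb{pt}^\bullet(\mathcal{E}))$ I would propagate the representable case along the density presentation. Since $\Theta$ sends $\mathcal{A}^\bullet$ (by construction) to the same weighted bicolimit of the $\mathsf{Sh}(X)$, the contravariant hom $\mathsf{Topoi}(-, \mathcal{E})$ turns this bicolimit into the weighted bilimit of the categories $\mathsf{Topoi}(\mathsf{Sh}(X), \mathcal{E}) = \mathbb{pt}^\bullet(\mathcal{E})^X$; on the other side, the co-Yoneda computation of maps out of a weighted colimit sends the density presentation of $\mathcal{A}^\bullet$ to the very same bilimit of the $\mathbb{pt}^\bullet(\mathcal{E})^X$. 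Matching the two bilimits, coherently in $\mathcal{E}$, yields the claimed pseudonatural equivalence and hence the biadjunction $\Theta \dashv \mathbb{pt}^\bullet$ on modest profiles. Evaluating the whole picture at the one-point space recovers $\mathsf{pt}$ and identifies $\Theta \dashv \mathbb{pt}^\bullet$ as an enhancement of the Scott adjunction of \cite{thcat}.

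The main obstacle is genuinely the size control rather than the formal skeleton: one must verify that the weighted bicolimit defining $\Theta(\mathcal{A}^\bullet)$ remains a Grothendieck topos, which is what forces the restriction to modest profiles and explains the word \emph{relative}. A secondary subtlety is that the profiles are valued in $\mathsf{Acc}_\omega$ rather than in $\mathsf{Cat}$, so I would need the representables $\yo X$ (small posets with directed colimits, as in the previous remark) to be genuinely dense inside $\underline{\mathsf{Acc}^{\mathsf{Sp}}_\omega}$ and the density presentation used above to be available in this $\mathsf{Acc}_\omega$-valued world, which is precisely what modesty encodes. Finally, all the colimit/limit interchanges above are bicategorical, so the remaining work is bookkeeping the coherence $2$-cells and checking pseudonaturality of the comparison; here I would lean on the standard calculus of pointwise bi-Kan extensions as in \cite{riehl2017category}.
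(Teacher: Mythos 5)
Your proposal is correct and takes essentially the same approach as the paper: the paper also obtains $\Theta$ as the left bi-Kan extension $\mathsf{biLan}_{\yo}(\mathsf{Sh})$ (realization for the nerve $\mathbb{pt}^\bullet$), computed via small weighted bicolimits in $\mathsf{Topoi}$ over the density presentation that modesty provides, and packages the result as a relative pseudoadjunction. The one ingredient the paper cites explicitly that you subsume under ``what modesty encodes'' is the fact that bilimits and bicolimits of accessible categories with directed colimits are computed in $\mathsf{Cat}$, which is what justifies running the Yoneda/density argument for $\mathsf{Acc}_\omega$-valued rather than $\mathsf{Cat}$-valued prestacks.
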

\begin{proof}
Recall that bicolimits and bilimits of accessible categories with directed colimits are computed in $\mathsf{Cat}$ (\cite{gregorybird} and \cite[5.8]{lack2023virtual}). Thus the theorem follows directly from the general theory of biKan extensions (\cite{di2022bi,descotte2018sigma}) and relative pseudoadjunctions \cite{fiore2018relative}.

\[\begin{tikzcd}
	& {\mathsf{Sp}} & {\mathsf{Topoi}} \\
	{\underline{\mathsf{Acc}^{\mathsf{Sp}}_\omega}} & {\underline{\mathsf{PStk}}(\mathsf{Sp})} \\
	{\mathsf{Acc}^{\mathsf{Sp}}_\omega} & {\mathsf{PStk}(\mathsf{Sp})}
	\arrow["{\mathsf{Sh}}", from=1-2, to=1-3]
	\arrow[""{name=0, anchor=center, inner sep=0}, "\Theta"{description}, dashed, from=2-2, to=1-3]
	\arrow[hook, from=2-2, to=3-2]
	\arrow["\yo"', from=1-2, to=2-2]
	\arrow[""{name=1, anchor=center, inner sep=0}, "{\mathbb{pt}}"{description}, curve={height=-12pt}, from=1-3, to=3-2]
	\arrow[hook, from=2-1, to=2-2]
	\arrow[hook, from=3-1, to=3-2]
	\arrow[hook, from=2-1, to=3-1]
	\arrow[from=1-2, to=2-1]
	\arrow["\dashv"{anchor=center, rotate=-61}, draw=none, from=0, to=1]
\end{tikzcd}\]
\end{proof}

\begin{rem}[The topos of coordinates of a modest profile] \label{coordinates}

The existence of $\Theta$ in the theorem above follow from the general theorem of biKan extensions and is computed by the formula \[\Theta = \mathsf{biLan}_{\yo} (\mathsf{Sh}).\]
Of course, this may seem as a very unsatisfactory equation, which is in principle hard to compute. Yet, because of the very structure of the $2$-category of topoi, we do have a way to compute $\Theta$ by the chain of equivalences below.
\begin{align*}
   \Theta (\mathcal{A}^\bullet) \simeq & \mathsf{Topoi} (\Theta (\mathcal{A}^\bullet), \mathsf{Set}[\mathbb{O}]) \\
        \simeq & \mathsf{Acc}_\omega^\mathsf{Sp}(\mathcal{A}^\bullet, \mathbb{pt}^\bullet(\mathsf{Set}[\mathbb{O}])) \\
       \simeq  & \mathsf{Acc}_\omega^\mathsf{Sp}(\mathcal{A}^\bullet, \mathsf{Set}^\bullet). 
\end{align*}
This remark gives an \textit{a posteriori} proof of the fact that the category $\mathsf{Acc}_\omega^\mathsf{Sp}(\mathcal{A}^\bullet, \mathsf{Set}^\bullet)$ is a Grothendieck topos for every modest profile. A priori, we have a faithful forgetful functor $\mathsf{Acc}_\omega^\mathsf{Sp}(\mathcal{A}^\bullet, \mathsf{Set}^\bullet) \to \mathsf{Acc}_\omega(\mathcal{A}^1, \mathsf{Set})$. With quite some work, one can show that such a functor preserve colimits and finite limits, and yields a geometric surjection $\mathsf{S}(\mathcal{A}^1) \twoheadrightarrow \mathbb{O}(\mathcal{A}^\bullet)$ where $\mathsf{S}(\mathcal{A}^1)$ is the Scott topos of $\mathcal{A}^1$ in the sense of \cite{thcat}, but this would involve a very delicate analysis to find a generator of $\mathbb{O}(\mathcal{A}^\bullet)$. Lurie does something similar to this in \cite[5.4.5]{lurieultracategories}.
\end{rem}

\begin{rem}[The Counit as a representation] \label{reprensentation}
Let $\mathcal{E}$ be a topos, and assume that $\mathbb{pt}^\bullet(\mathcal{E})$ is modest. Then, we have a counit $\epsilon: \Theta \mathbb{pt}(\mathcal{E}) \to \mathcal{E}$. Let us spell out explicitly the inverse image of such functor,
\[\epsilon_{\mathcal{E}}^*: \mathcal{E} \to \mathsf{Acc}_\omega^\mathsf{Sp}(\mathbb{pt}^\bullet(\mathcal{E}), \mathsf{Set}^\bullet).\]
The functor $\epsilon$ represents the usual \textit{test functor} to check whether we can recover the topos $\mathcal{E}$ from its \textit{(indexted) category of (topological) points}, similarly to \cite{thlo} or Makkai/Lurie's conceptual completeness. 
\end{rem}

\subsubsection{Ultraprofiles}

\begin{defn}[Ultraprofile]
Let $i_X : X \to \beta(X)$ be the usual inclusion of a set $X$ in its space of ultrafilters. We say that a ultaprofile is a profile that is right Kan injective with respect to $\yo(i_X)$ for all $Xs$.
\end{defn}


\begin{prop} \label{ultaprofilebuono}
Let $\mathcal{E}$ be a topos. $\mathcal{E}$ is $\beta$-complete if and only if its profile of points $\mathbb{pt}^\bullet(\mathcal{E})$ is an ultraprofile.
\end{prop}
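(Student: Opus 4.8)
The plan is to recognise that both conditions in the statement are instances of right Kan injectivity, and to transport the condition across the nerve $\mathbb{pt}^\bullet$ by means of the bicategorical Yoneda lemma. Recall from the discussion of representable profiles that $\yo \colon \mathsf{Sp} \to \mathsf{Acc}^\mathsf{Sp}_\omega$ is a Yoneda embedding, so that for every accessible profile $\mathcal{A}^\bullet$ and every set $X$ there is an equivalence $\mathsf{Acc}^\mathsf{Sp}_\omega(\yo(X), \mathcal{A}^\bullet) \simeq \mathcal{A}^X$, pseudonatural in $X$. Recall also that $\mathbb{pt}^\bullet \circ \mathsf{Sh} \simeq \yo$, whence the map $i_X \in \mathsf{Emb}_\beta$ is exactly $\mathsf{Sh}(i_X) \colon \mathsf{Set}^X \to \mathsf{Sh}(\beta X)$ and satisfies $\mathbb{pt}^\bullet(\mathsf{Sh}(i_X)) \simeq \yo(i_X)$. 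The whole argument will consist in unwinding these two facts simultaneously.

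First I would specialise the Yoneda equivalence to $\mathcal{A}^\bullet = \mathbb{pt}^\bullet(\mathcal{E})$, obtaining for each set $X$ an equivalence
\[\mathsf{Acc}^\mathsf{Sp}_\omega(\yo(X), \mathbb{pt}^\bullet(\mathcal{E})) \simeq \mathbb{pt}^\bullet(\mathcal{E})^X = \mathsf{Topoi}(\mathsf{Sh}(X), \mathcal{E}) = \mathsf{Topoi}(\mathsf{Set}^X, \mathcal{E}),\]
and the analogous equivalence with $\beta X$ in place of $X$. The key step is to verify that, under these two equivalences, the precomposition functor $\yo(i_X)^\sharp$ is identified with $i_X^\sharp = (-)\circ i_X$. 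This follows from pseudonaturality of the Yoneda equivalence: precomposing a pseudonatural transformation $\alpha \colon \yo(\beta X) \to \mathbb{pt}^\bullet(\mathcal{E})$ with $\yo(i_X)$ carries its representing object $\alpha_{\beta X}(\id) \in \mathbb{pt}^\bullet(\mathcal{E})^{\beta X}$ to its image under $\mathbb{pt}^{i_X}(\mathcal{E}) = \mathsf{Topoi}(\mathsf{Sh}(i_X), \mathcal{E}) = i_X^\sharp$. In other words, the square relating $\yo(i_X)^\sharp$ to $i_X^\sharp$ along the two Yoneda equivalences commutes up to coherent isomorphism.

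Since right Kan injectivity of an object with respect to a morphism is by definition the existence of a right adjoint to the associated precomposition functor with invertible unit, and since this property is manifestly invariant under replacing the precomposition functor by an equivalent one, I would conclude that $\mathbb{pt}^\bullet(\mathcal{E})$ is right Kan injective with respect to $\yo(i_X)$ if and only if $\mathcal{E}$ is right Kan injective with respect to $i_X$. Quantifying over all sets $X$, this says exactly that $\mathbb{pt}^\bullet(\mathcal{E})$ is an ultraprofile if and only if $\mathcal{E}$ is $\beta$-complete, which is the claim.

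The main obstacle I expect lies in the second step: one must ensure that the Yoneda equivalences are genuinely pseudonatural and compatible with the $2$-cells of $\mathsf{Acc}^\mathsf{Sp}_\omega$, so that not merely the hom-categories but the full adjunction data -- the right adjoint together with its invertible unit -- transports faithfully across the equivalence. Concretely, one checks that the coherence isomorphism witnessing $\yo(i_X)^\sharp \simeq i_X^\sharp$ is compatible with the comparison $2$-cells governing the adjunctions on each side. This is a routine but somewhat tedious exercise in the bicategorical Yoneda lemma, and it is the only place where the $2$-dimensional structure of $\mathsf{Acc}^\mathsf{Sp}_\omega$ needs careful handling.
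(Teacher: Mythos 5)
Your proof is correct, and the core of it -- transferring right Kan injectivity across an equivalence between $\mathsf{Topoi}(\mathsf{Set}^X,\mathcal{E})$ and $\mathsf{Acc}_\omega^{\mathsf{Sp}}(\yo(X),\mathbb{pt}^\bullet(\mathcal{E}))$ that identifies the two precomposition functors -- is exactly what the paper does. The difference is in how you produce that equivalence. The paper invokes the relative biadjunction $\Theta \dashv \mathbb{pt}^\bullet$ of \Cref{newadjunction}, together with the observation that $i_X$ is nothing but $\Theta(\yo(i_X))$ (using $\Theta \circ \yo \simeq \mathsf{Sh}$, which holds since $\Theta = \mathsf{biLan}_{\yo}(\mathsf{Sh})$); the representables $\yo(X)$ are modest, so the adjunction applies, and naturality in the profile variable identifies precomposition with $\yo(i_X)$ with precomposition with $i_X$. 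You bypass $\Theta$ entirely and use the bicategorical Yoneda lemma, $\mathsf{Acc}^{\mathsf{Sp}}_\omega(\yo(X), \mathbb{pt}^\bullet(\mathcal{E})) \simeq \mathbb{pt}^\bullet(\mathcal{E})^X = \mathsf{Topoi}(\mathsf{Set}^X,\mathcal{E})$, with pseudonaturality supplying the identification $\yo(i_X)^\sharp \simeq i_X^\sharp$. The two equivalences in fact coincide -- the biadjunction restricted to representables \emph{is} Yoneda -- but your route is more self-contained: it requires neither the existence of $\Theta$ (hence none of the biKan-extension and relative-pseudoadjunction machinery) nor the modesty of $\yo(X)$, whereas the paper's proof is shorter once \Cref{newadjunction} is available. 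One caveat applies equally to both arguments: the hom-categories here are taken in $\mathsf{Acc}_\omega^{\mathsf{Sp}}$, whose $1$-cells are pseudonatural transformations with components in $\mathsf{Acc}_\omega$, not in plain prestacks of categories, so one must check that the Yoneda (equivalently, adjunction-unit) correspondence restricts to such transformations; this is the coherence bookkeeping you flag at the end of your proposal, and the paper glosses over it as well, so it does not count against your argument relative to the paper's.
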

\begin{proof}
Follows directly from the observation that the map $i_X: \mathsf{Set}^X \to \mathsf{Sh}(\beta(X))$ is nothing but $\Theta(\yo(i_X))$ and the following chain of equivalences, \[\mathsf{Topoi}(\mathsf{Set}^X,\mathcal{E}) \simeq \mathsf{Topoi}(\Theta(\yo(X))),\mathcal{E}) \simeq \mathsf{Acc}_\omega^{\mathsf{Sp}}(\yo(X), \mathbb{pt}^\bullet(\mathcal{E})).\]
\end{proof}

\subsection{Ultraionads in $\mathsf{Bion}$}

The theory of ionad was introduced by Garner in \cite{ionads} and later developed in \cite{thgeo} and \cite{thlo}. Briefly, recall that a ionad is a category $C$ equipped with an lex comonad $\mathsf{Int}: \P(C) \to \P(C)$ on its category of small copresheaves that mimics the behavior of an interior operator. We refer to those papers for a proper introduction to the topic. In this subsection we shall discuss the notion of ultraionad and relate it to that of $\beta$-complete topoi. We start by recalling two useful results in the theory of ionads.

\begin{disc}[Ionads and spaces]
In \cite[Example 3.5(2)]{ionads}, Garner describes a general recipe to construct a buonded ionad $\Sigma X$ from a topological space $X$. If we look at $\mathsf{Sp}$ as a $2$-category, where the $2$-dimensional structure is given by the specialization order, this construction yields a $2$-adjunction,  \[ \Lambda : \mathsf{BIon} \leftrightarrows  \mathsf{Sp} : \Sigma, \]
which Garner describes in \cite[Example 4.8]{ionads}. Notice that the right adjoint is actually fully faithful, so that the $2$-category of spaces is a fully sub-$2$-category of bounded ionads.
\end{disc}

\begin{disc}[Ionads and topoi] \label{isbell}
In \cite[Sec. 3]{thgeo}, the author shows that there is a biadjunction between the $2$-category of topoi and the $2$-category of bounded ionads,
\[ \mathbb{O} : \mathsf{BIon}   \leftrightarrows  \mathsf{Topoi} :\mathbb{pt}. \]
This adjunction plays the same role of the adjunction relating topological spaces to locales, and indeed restricts to a biequivalence of $2$-categories between topoi with enough points and sober bounded ionads \cite[Sec 4.]{thgeo}.
\end{disc}


\begin{disc}[Clones of $\mathsf{Emb}_\beta$]
This whole paper is based on the study of the map $X \to \beta(X)$, and its instantiation in the $2$-categories of interests. We now collect three different instantiations of this morphism in the three $2$-categories that populate this section.
\begin{itemize}
    \item In $\mathsf{Sp}$, we find the class $\mathsf{emb}_\beta$, made of the maps $X \to \beta (X)$, where $X$ is a discrete set, and $\beta(X)$ is its space of ultrafilters. In a sense, we have been looking at this maps for the whole paper.
    \item In $\mathsf{BIon}$, we define $\mathbb{Emb}_\beta = \Sigma (\mathsf{emb}_\beta)$.
    \item In $\mathsf{Topoi}$, we have our class $\mathsf{Emb}_\beta$. It is easy to see that the following equations hold, \[\mathbb{O}(\mathbb{Emb}_\beta) \cong \mathsf{Emb}_\beta \quad \quad \mathbb{pt}(\mathsf{Emb}_\beta) = \mathbb{Emb}_\beta. \]
    
    \end{itemize}
\end{disc}

\begin{defn}[Ultraionads]
We define the $2$-category of ultraionads $\mathsf{UltBIon}$ to be $\mathsf{RInj}(\mathbb{Emb}_\beta)$.
\end{defn}

\begin{prop} \label{ionadsaregood}
\begin{itemize}
\item[]
    \item A topos $\mathcal{E}$ is $\beta$-complete if and only if its ionad of opoints $\mathbb{pt}(\mathcal{E})$ is an ultraionad.
    \item A ionad $(\mathcal{A}, \mathsf{Int})$ is an ultraionad if and only if $\mathbb{O}(\mathcal{A})$ is a $\beta$-complete topos.
\end{itemize} 
\end{prop}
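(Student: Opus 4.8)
The plan is to read both equivalences off the biadjunction $\mathbb{O} \dashv \mathbb{pt}$ of \Cref{isbell}, using the compatibility of the two classes recorded just above, namely $\mathbb{O}(\mathbb{Emb}_\beta) \cong \mathsf{Emb}_\beta$ and $\mathbb{pt}(\mathsf{Emb}_\beta) = \mathbb{Emb}_\beta$. The one abstract input I would invoke is the transfer principle for Kan injectivity along an adjunction from \cite{dlsolo}: if $L \dashv R$ and $g$ lies in the domain of $L$, then the adjunction equivalence $\mathsf{Topoi}(\mathbb{O}(-), \mathcal{E}) \simeq \mathsf{BIon}(-, \mathbb{pt}(\mathcal{E}))$ intertwines precomposition by $\mathbb{O}(g)$ with precomposition by $g$, so that $\mathcal{E}$ is right Kan injective with respect to $\mathbb{O}(g)$ exactly when $\mathbb{pt}(\mathcal{E})$ is right Kan injective with respect to $g$ (the right adjoint to precomposition, together with its invertible unit, is transported across the equivalence of hom-categories).

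For the first claim I would apply this verbatim with $L = \mathbb{O}$ and the class $\mathbb{Emb}_\beta$. Since $\mathbb{O}(\mathbb{Emb}_\beta) \cong \mathsf{Emb}_\beta$, saying that $\mathcal{E}$ is right Kan injective with respect to every map in $\mathsf{Emb}_\beta$ — that is, that $\mathcal{E}$ is $\beta$-complete — is the same as saying that $\mathbb{pt}(\mathcal{E})$ is right Kan injective with respect to every map in $\mathbb{Emb}_\beta$, which is the definition of $\mathbb{pt}(\mathcal{E})$ being an ultraionad. This direction is purely formal precisely because $\mathbb{O}$ is the left adjoint and the ambiguous class lives in its source.

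For the second claim the variances no longer match the adjunction directly — $\mathbb{O}(\mathcal{A})$ now sits in the target slot of the hom — so I would route through the biequivalence. For a \emph{sober} bounded ionad $\mathcal{A}$, \Cref{isbell} makes $\mathbb{O}$ restrict to an equivalence of $2$-categories between sober ionads and topoi with enough points, and this equivalence matches $\mathbb{Emb}_\beta$ (whose domains $\Sigma X$ and codomains $\Sigma\beta(X)$ are sober) with $\mathsf{Emb}_\beta$ (whose endpoints have enough points). As an equivalence of $2$-categories preserves and reflects right Kan injectivity with respect to matched morphisms, we get that $\mathcal{A}$ is an ultraionad iff $\mathbb{O}(\mathcal{A})$ is $\beta$-complete for sober $\mathcal{A}$; equivalently this is the first claim applied to $\mathcal{E} = \mathbb{O}(\mathcal{A})$ together with $\mathbb{pt}\,\mathbb{O}(\mathcal{A}) \simeq \mathcal{A}$. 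To drop soberness I would use that the unit $\eta_\mathcal{A}: \mathcal{A} \to \mathbb{pt}\,\mathbb{O}(\mathcal{A})$ is inverted by $\mathbb{O}$ (soberification leaves the topos of opens unchanged, a consequence of the triangle identity and idempotency of $\mathbb{O} \dashv \mathbb{pt}$), so that $\mathbb{O}(\mathcal{A})$ is $\beta$-complete iff $\mathbb{O}(\mathbb{pt}\,\mathbb{O}(\mathcal{A}))$ is, iff $\mathbb{pt}\,\mathbb{O}(\mathcal{A})$ is an ultraionad.

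The main obstacle is exactly this last reduction: it closes only once I know that being an ultraionad is invariant under $\eta_\mathcal{A}$, i.e. that $\mathcal{A}$ is an ultraionad iff $\mathbb{pt}\,\mathbb{O}(\mathcal{A})$ is. This is delicate because right Kan injectivity of a ionad is tested on the hom-categories $\mathsf{BIon}(\Sigma X, \mathcal{A})$ and $\mathsf{BIon}(\Sigma\beta(X), \mathcal{A})$ — maps \emph{into} $\mathcal{A}$ — and these genuinely acquire new formal points under $\eta_\mathcal{A}$, so the invariance is not a formal consequence of the reflection. The way I would attack it is to show that the relevant right Kan extension along $\Sigma(i_X)$ — the ultraproduct functor identified in \Cref{emergentultrastructures} — is computed inside the topos of opens, so that the existence of $(\Sigma(i_X))_\sharp$ with invertible unit depends on $\mathcal{A}$ only through $\mathbb{O}(\mathcal{A})$; granting this, $\mathbb{Emb}_\beta$-injectivity of $\mathcal{A}$ and of $\mathbb{pt}\,\mathbb{O}(\mathcal{A})$ coincide and the argument concludes. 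Verifying that these Kan extensions are insensitive to the non-localic point-data carried by a ionad morphism is the crux, and is where I expect the real work to lie.
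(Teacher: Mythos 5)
Your proof of the first claim is exactly the paper's: transport right Kan injectivity across the biadjunction $\mathbb{O} \dashv \mathbb{pt}$ of \Cref{isbell}, using $\mathbb{O}(\mathbb{Emb}_\beta) \cong \mathsf{Emb}_\beta$ and the fact that the adjunction equivalence intertwines the two precomposition functors. That part is correct and formal, precisely because the ambiguous objects sit in the contravariant slot of the hom.

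For the second claim you have put your finger on exactly what the paper's one-line argument (``$\mathbb{pt}(\mathsf{Emb}_\beta) = \mathbb{Emb}_\beta$ and the properties of adjunctions'') does not address: $\mathbb{O}(\mathcal{A})$ occupies the covariant slot, so the adjunction only yields the statement after replacing $\mathcal{A}$ by its soberification $\mathbb{pt}\,\mathbb{O}(\mathcal{A})$, and what remains is invariance of the ultraionad property under the unit $\eta_\mathcal{A}$. This remaining step is not just missing from your argument; it is false in general, so the repair you sketch (that the right Kan extension along $\Sigma(i_X)$ depends on $\mathcal{A}$ only through $\mathbb{O}(\mathcal{A})$) cannot succeed. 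Concretely, let $A$ be an infinite set with the cofinite topology and $\mathcal{A} = \Sigma A$. Since $\Sigma$ is $2$-fully faithful, $\Sigma A$ is an ultraionad iff $A$ is right Kan injective in $\mathsf{Sp}$ with respect to the maps $X \to \beta X$. But $A$ is $T_1$, so its specialization order, and hence every hom-poset $\mathsf{Sp}(\beta X, A)$, is discrete; and an injective family $f: X \to A$ with $X$ infinite admits many distinct continuous extensions to $\beta X$ (send all free ultrafilters to any fixed point of $A$: preimages of cofinite opens are then either unions of isolated points or complements of finite sets of isolated points, so this is continuous). A right Kan extension would be a terminal object in this discrete, non-singleton category of extensions, so none exists and $\Sigma A$ is not an ultraionad. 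On the other hand every element of the frame of cofinite subsets of $A$ is compact, so this frame is coherent; hence $\mathbb{O}(\Sigma A) \simeq \mathsf{Sh}(A)$ is a coherent topos and is $\beta$-complete by \Cref{mainth}. So the second claim fails for this non-sober bounded ionad. What is true, and what your argument genuinely establishes, is the second claim for sober ionads, equivalently that $\mathbb{O}(\mathcal{A})$ is $\beta$-complete iff the soberification $\mathbb{pt}\,\mathbb{O}(\mathcal{A})$ is an ultraionad. The proposition should be read with that restriction; its own proof glosses over exactly the variance issue you identified, and your ``crux'' is a real obstruction rather than an omitted lemma.
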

\begin{proof}
The first claim follows directly from the observation that $\mathbb{O}(\mathbb{Emb}_\beta) \cong \mathsf{Emb}_\beta$ and the properties of adjunctions. Similarly, the second claim follows from the observation that $\mathbb{pt}(\mathsf{Emb}_\beta) = \mathbb{Emb}_\beta$ and the properties of adjunctions.
\end{proof}

\newpage
\section{Conclusions and open questions} \label{sec4}

We shed light on two different topics, namely the geometric nature/behavior of coherent topoi and what structures come into play in the very definition of ultrastructure and ultracategory. 
(Un)fortunately this paper opens many more doors than it closes and we shall comment on those in this section.

\subsection{Geometry of coherent topoi}

\Cref{sec1} is mostly devoted to set the diagram below. 
\[\begin{tikzcd}
	& {\mathsf{CohTopoi}_{\mathcal{M}}} \\
	{\mathsf{RInj}(\mathsf{Emb}_\flat)} && {\mathsf{RInj}(\mathsf{Emb}_\beta)}
	\arrow["{\mathbb{U}_\beta^\flat}"{description}, from=2-1, to=2-3]
	\arrow["i"', hook', from=1-2, to=2-1]
	\arrow["j", hook, from=1-2, to=2-3]
\end{tikzcd}\]
    As we hinted in \Cref{characterisation} and \Cref{char2}, we do not consider this to be a definitive account on flat geometric morphisms and $\flat$-completeness. Let us list below some of the doors that this paper leaves open.
    
\begin{itemize}
    \item a general theory of spartan geometric morphisms and flat geometric morphisms (and their interaction).
    \item can we find a smaller and easier to understand class of flat geometric morphism whose saturation is the whole $\mathsf{Emb}_\flat$? 
    \item there is an interesting class of geometric morphisms that class that we haven't studied and seems a good candidate to saturate $\mathsf{Emb}_\flat$. This is the class given in \Cref{exampleindcompl} which sits in between $\mathsf{Emb}_\beta$ and $\mathsf{Emb}_\flat$. The relevance of this class for our theory is witnessed by \Cref{char2}.
    \item is $\mathsf{RInj}(\mathsf{Emb}_\beta)$ the completion of $\mathsf{CohTopoi}_{\mathcal{M}}$ under coreflections?
    \item in this spirit of and referencing to the discussion in \Cref{characterisation}, a preliminary question to answer would be which topoi embed in a coherent one?
\end{itemize}

All these questions seem complelling to us, and would impact on a possible retuning of the notion of ultracategory.

\subsection{Formal model theories and ultraobjects}

On the side of formal model theory, even more questions are left to the reader. In a sense, the questions above are all preliminary to a good understanding of ultra-objects, as they shape our definitions. Yet, besides the questions about ultrastructures and ultracategories, other interesting topics emerge from this paper and would deserve some attention.

\begin{itemize}
    \item going back to \Cref{profileofmodels}, we wonder whether $\mathbb{pt}^\bullet$ is always modest. Some results obtained by Lurie on accessible ultrastructures seem to suggest that this is not such a strange conjecture. Of course, if $\mathbb{pt}^\bullet$ happens to be always modest, \Cref{newadjunction} ends up being an honest bi-adjunction.
    \item Speaking of \Cref{newadjunction}, is this biadjunction (pseudo)idempotent in the spirit of \cite{thgeo}?
    \item What is the relationship between the models of formal model theory we have discussed?
\end{itemize}
\newpage
\section*{Acknowledgements}
I am indebted with \textit{Morgan Rogers} for all the discussions we had in the last year on a somewhat related project, those have informed and educated my knowledge of topos theory to a completely new level. I hope our collaboration will eventually see the light. I thank \textit{Axel Osmond} and \textit{Christian Espindola} for their inspiring comments on a preliminary draft of this paper. Finally I am grateful to \textit{Peter Lumsdaine} for showing interest for this research in private conversations.

\bibliography{thebib}

\begin{thebibliography}{FGHW18}

\bibitem[AF13]{awodey2013first}
Steve Awodey and Henrik Forssell.
\newblock First-order logical duality.
\newblock {\em Annals of Pure and Applied Logic}, 164(3):319--348, 2013.

\bibitem[AR94]{adamek_rosicky_1994}
J.~Ad{\'a}mek and J.~Rosický.
\newblock {\em Locally Presentable and Accessible Categories}.
\newblock London Mathematical Society Lecture Note Series. Cambridge University
  Press, 1994.

\bibitem[ASV15]{adamek2015kan}
Ji{\v{r}}{\'\i} Ad{\'a}mek, Lurdes Sousa, and Ji{\v{r}}{\'\i} Velebil.
\newblock Kan injectivity in order-enriched categories.
\newblock {\em Mathematical Structures in Computer Science}, 25(1):6--45, 2015.

\bibitem[BG]{gregorybird}
J~Bird.~Gregory.
\newblock Limits in 2-categories of locally presentable categories.
\newblock PhD thesis, University of Sydney, 1984. Circulated by the Sydney
  Category theory seminar.

\bibitem[BP99]{borceux1999left}
Francis Borceux and Maria~Cristina Pedicchio.
\newblock Left exact presheaves on a small pretopos.
\newblock {\em Journal of Pure and Applied Algebra}, 135(1):9--22, 1999.

\bibitem[BR12]{beke2012abstract}
Tibor Beke and Jir{\'\i} Rosick{\`y}.
\newblock Abstract elementary classes and accessible categories.
\newblock {\em Annals of Pure and Applied Logic}, 163(12):2008--2017, 2012.

\bibitem[CS17]{carvalho2017kan}
Margarida Carvalho and Lurdes Sousa.
\newblock On kan-injectivity of locales and spaces.
\newblock {\em Applied Categorical Structures}, 25(1):83--104, 2017.

\bibitem[DDS18]{descotte2018sigma}
Maria~Emilia Descotte, Eduardo~Julio Dubuc, and Martin Szyld.
\newblock Sigma limits in 2-categories and flat pseudofunctors.
\newblock {\em Advances in Mathematics}, 333:266--313, 2018.

\bibitem[DL20]{liberti2019codensity}
Ivan Di~Liberti.
\newblock Codensity: Isbell duality, pro-objects, compactness and
  accessibility.
\newblock {\em Journal of Pure and Applied Algebra}, page 106379, 2020.

\bibitem[DL22a]{thcat}
Ivan Di~Liberti.
\newblock General facts on the scott adjunction.
\newblock {\em Applied Categorical Structures}, 30(3):569--591, 2022.

\bibitem[DL22b]{thgeo}
Ivan Di~Liberti.
\newblock Towards higher topology.
\newblock {\em Journal of Pure and Applied Algebra}, 226(3):106838, 2022.

\bibitem[DLLS22]{dlsolo}
Ivan Di~Liberti, Gabriele Lobbia, and Lurdes Sousa.
\newblock Kz-monads and kan injectivity.
\newblock {\em arXiv preprint arXiv:2211.00380}, 2022.

\bibitem[DLO22]{di2022bi}
Ivan Di~Liberti and Axel Osmond.
\newblock Bi-accessible and bipresentable 2-categories.
\newblock {\em arXiv preprint arXiv:2203.07046}, 2022.

\bibitem[EF99]{escardo1999semantic}
Mart{\'\i}n~H Escard{\'o} and Robert~C Flagg.
\newblock Semantic domains, injective spaces and monads.
\newblock {\em Electronic Notes in Theoretical Computer Science}, 20:229--244,
  1999.

\bibitem[Esc97]{escardo1997injective}
Mart{\i}n~H{\"o}tzel Escard{\'o}.
\newblock Injective spaces via the filter monad.
\newblock In {\em Topology Proceedings}, volume~22, pages 97--110, 1997.

\bibitem[Esc98]{escardo1998properly}
Mart{\'\i}n~H{\"o}tzel Escard{\'o}.
\newblock Properly injective spaces and function spaces.
\newblock {\em Topology and its Applications}, 89(1-2):75--120, 1998.

\bibitem[FGHW18]{fiore2018relative}
Marcelo Fiore, Nicola Gambino, Martin Hyland, and Glynn Winskel.
\newblock Relative pseudomonads, kleisli bicategories, and substitution
  monoidal structures.
\newblock {\em Selecta Mathematica}, 24(3):2791--2830, 2018.

\bibitem[Gar12]{ionads}
Richard Garner.
\newblock Ionads.
\newblock {\em Journal of Pure and Applied Algebra}, 216(8):1734 -- 1747, 2012.
\newblock Special Issue devoted to the International Conference in Category
  Theory `CT2010'.

\bibitem[Isb88]{isbell1988flat}
John Isbell.
\newblock Flat= prosupersplit.
\newblock In {\em Houston Journal of Mathematics}. Citeseer, 1988.

\bibitem[Joh81]{johnstone1981injective}
Peter~T Johnstone.
\newblock Injective toposes.
\newblock In {\em Continuous lattices}, pages 284--297. Springer, 1981.

\bibitem[Joh84]{johnstone1984wallman}
Peter~T Johnstone.
\newblock Wallman compactification of locales.
\newblock In {\em STATE UNIVERSITY OF NEW YORK AT BUFFALO}. Citeseer, 1984.

\bibitem[Joh85]{johnstone1985general}
Peter~T Johnstone.
\newblock How general is a generalized space.
\newblock {\em Aspects of Topology, in memory of Hugh Dowker, London Math. Soc.
  Lecture Notes Series}, 93:77--111, 1985.

\bibitem[Joh86]{johnstone1986stone}
Peter~T Johnstone.
\newblock {\em Stone spaces}, volume~3.
\newblock Cambridge university press, 1986.

\bibitem[Joh02]{Sketches}
Peter~T. Johnstone.
\newblock {\em {Sketches of an elephant: a Topos theory compendium}}.
\newblock Oxford logic guides. Oxford Univ. Press, New York, NY, 2002.

\bibitem[Lib20]{thlo}
Ivan~Di Liberti.
\newblock {F}ormal {M}odel {T}heory {\&} {H}igher {T}opology, 2020.
\newblock arXiv:2010.00319.

\bibitem[LM94]{sheavesingeometry}
Saunders~Mac Lane and Ieke Moerdijk.
\newblock {\em Sheaves in Geometry and Logic}.
\newblock Springer New York, 1994.

\bibitem[LT23]{lack2023virtual}
Stephen Lack and Giacomo Tendas.
\newblock Virtual concepts in the theory of accessible categories.
\newblock {\em Journal of Pure and Applied Algebra}, 227(2):107196, 2023.

\bibitem[Lur]{lurieultracategories}
J.~Lurie.
\newblock Ultracategories.
\newblock \url{http://www.math.harvard.edu/~lurie/papers/Conceptual.pdf}.

\bibitem[Mak87]{makkai1987stone}
Michael Makkai.
\newblock Stone duality for first order logic.
\newblock {\em Advances in Mathematics}, 65(2):97--170, 1987.

\bibitem[Mar95]{marmolejo1995ultraproducts}
Francisco Marmolejo.
\newblock Ultraproducts and continuous families of models.
\newblock 1995.

\bibitem[Mar98]{marmolejo1998continuous}
Francisco Marmolejo.
\newblock Continuous families of coalgebras.
\newblock {\em Journal of Pure and Applied Algebra}, 130(2):197--215, 1998.

\bibitem[MM05]{marmolejo2005locale}
F~Marmolejo and A~Merino.
\newblock Locale morphisms with exact direct image functor in sheaves.
\newblock {\em Journal of Pure and Applied Algebra}, 202(1-3):195--200, 2005.

\bibitem[MP89]{Makkaipare}
Michael Makkai and Robert Par{\'{e}}.
\newblock {\em Accessible Categories: The Foundations of Categorical Model
  Theory}.
\newblock American Mathematical Society, 1989.

\bibitem[MV00]{moerdijk2000proper}
Ieke Moerdijk and Jacob Johan~Caspar Vermeulen.
\newblock {\em Proper maps of toposes}, volume 705.
\newblock American Mathematical Soc., 2000.

\bibitem[Rie17]{riehl2017category}
Emily Riehl.
\newblock {\em Category theory in context}.
\newblock Courier Dover Publications, 2017.

\bibitem[She09]{shelah2009classification}
Saharon Shelah.
\newblock {\em Classification theory for abstract elementary classes},
  volume~18.
\newblock College Publications London, 2009.

\end{thebibliography}
\bibliographystyle{alpha}
\end{document}